\newtheorem{theorem}{Theorem}
\newtheorem{lemma}[theorem]{Lemma}
\newtheorem{proposition}[theorem]{Proposition}
\newtheorem{definition}{Definition} 
\newtheorem{conjecture}{Conjecture}
\newtheorem{question}[conjecture]{Question}
\newcommand{\Href}[1]{\hyperref[#1]{\Cref{#1}}}
\renewcommand{\href}[1]{\hyperref[#1]{\ref{#1}}}
\title{Constructing the antimagic labelings for double stars union paths on three vertices} 
\author{
Wei-Tian Li
\thanks{Department of Applied Mathematics,  National Chung Hsing University,  Taichung 40227,  Taiwan. 
{\tt Email:weitianli@nchu.edu.tw.} Supported by NSTC 112-2115-M-005 -002 -MY2}
\and
Po-Wen Yang
\thanks{Department of Applied Mathematics,  National Chung Hsing University,  Taichung 40227,  Taiwan. 
{\tt Email:std910467@gmail.com} }
}
\date{\today}
\begin{document}
\baselineskip=1.15\baselineskip

\maketitle

\begin{abstract}
For a graph on $m$ edges, a bijective function between the edge set of the graph and $\{1,2,\ldots,m\}$ 
is an antimagic labeling provided that when adding the labels of the edges incident to the same vertex, the sums are pairwise distinct. Hartsfield and Ringel conjectured that every connected graph has antimagic labeling. On the other hand, it is known that for any graph $G$, the disjoint union of $G$ and many $P_3$, a path on 3 vertices, is not antimagic. In this paper, we determined the exact number of $P_3$'s such that the disjoint union of a double star with the number of $P_3$'s is antimagic. 
In addition, we provide some examples of $(1,1)$-antimagic labelings. That is, the antimagic labelings have vertex sums 1 through the number of vertices of the graphs. 
\end{abstract}

{\bf keywords: antimagic, $(a,d)$-antimagic,   disconnected graphs, Pell equation}

\section{Introduction}

All graphs in this paper are simple graphs. 
Let $G$ be a graph on $m$ edges and $[a,b]$ be the set of integers $a\le x\le b$. 
An {\em antimagic labeling} for $G$ is a bijection $f$ between $E(G)$ and $[1,m]$ such that all vertices sums are pairwise distinct,  where the {\em vertex sum} $\phi(v)$ of a vertex $v$ in $V(G)$ is the sum of the labels $f(e)$ for all edges $e$ incident to $v$.
We say that a graph $G$ is {\em antimagic} if there exists an antimagic labeling for $G$. 
It is clear that this kind of labeling does not exist if $G=K_2$. 
Hartsfield and Ringel~\cite{HR90} initiated the study of the antimagic problem of graphs. 
They gave the following two conjectures and confirmed the validity for some types of graphs. 

\begin{conjecture}{\rm\cite{HR90}}\label{conj}
Every connected graph $G\neq K_2$ is antimagic.  
\end{conjecture}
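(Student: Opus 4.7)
The plan has to begin with acknowledging that this is the celebrated Hartsfield--Ringel conjecture, open since 1990. No full proof is currently known, so what follows is a strategy of attack rather than a completed argument. I would split on the minimum degree $\delta(G)$ and treat dense and sparse graphs by different methods.

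For the dense regime, the natural approach extends the probabilistic argument of Alon, Kaplan, Lev, Roditty, and Yuster. Assign the labels $[1,m]$ to $E(G)$ uniformly at random; by concentration, each vertex sum $\phi(v)$ lies close to $\deg(v)\cdot (m+1)/2$, so vertices of distinct degrees are automatically separated, and for vertices of equal degree one performs local edge-label swaps to resolve collisions one at a time. The hope is to push the lower bound on $\delta(G)$ down from the known $\Omega(\log n)$ to a constant, ideally $\delta(G)\ge 2$.

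For the sparse regime, in particular trees and graphs of small maximum degree, I would use an explicit constructive labeling. Root the graph at a vertex of maximum degree, order the edges by BFS depth, and assign labels in a zig-zag pattern (alternating the smallest and largest labels still available) so that the leaf sums form a controllable near-arithmetic progression disjoint from the internal sums. This philosophy is what drives the small-case successes (caterpillars, spiders, most trees of small diameter) and is the same spirit in which the present paper attacks the double-star plus $P_3$ family.

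The main obstacle is the intermediate regime: connected graphs with bounded maximum degree but no useful rooting, such as cubic graphs of large girth or sparse expander-like graphs. The probabilistic method loses concentration there, while the constructive method runs out of freedom since every label placed simultaneously constrains two vertex sums and the network of constraints becomes globally rigid. A resolution of the full conjecture would almost certainly require a new structural decomposition theorem for connected graphs $G\neq K_2$ that permits independent local calibration of vertex sums, which is precisely what the literature has not yet produced and why, at present, one must settle for restricted families like the one studied in this paper.
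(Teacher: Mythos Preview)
Your assessment is accurate: the statement is the Hartsfield--Ringel conjecture, which the paper cites as an open problem and does \emph{not} attempt to prove. There is therefore no ``paper's own proof'' to compare against; the paper merely records the conjecture as motivation and then restricts attention to the much narrower family $S_{a,b}+cP_3$.

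What you have written is an honest survey of partial approaches (probabilistic for dense graphs, constructive for trees) together with a correct identification of the bottleneck regime. That is appropriate context, but it is not a proof, and you say so yourself. Since the statement is presented in the paper as a conjecture rather than a theorem, no proof is expected here; your proposal is best read as commentary rather than as a candidate argument to be vetted.
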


\begin{conjecture}{\rm\cite{HR90}}\label{conj2}
Every tree $T\neq K_2$ is antimagic.  
\end{conjecture}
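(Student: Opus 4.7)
The plan is to attack \Href{conj2} by induction on the number of edges of $T$, with a base of the families already known to be antimagic (paths, stars, spiders, caterpillars, and, via the theme of the present paper, double stars). For the inductive step I would remove a carefully chosen leaf edge to obtain a smaller tree $T'$, invoke the induction hypothesis to get an antimagic labeling of $T'$, and then try to extend it to $T$ by assigning the label $m=|E(T)|$ to the removed edge while rescaling the remaining labels by adding $1$ (turning $f(e)\in[1,m-1]$ into $f(e)+1\in[2,m]$). This shifts every internal vertex sum by $\deg(v)$, which is a global, structured perturbation, and has a chance to preserve distinctness while opening up room for the new pendant edge.

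First I would classify the leaves of $T$ by the degree of their unique support vertex, since the pendant edge receiving a large label is the only one contributing to the leaf's vertex sum but only one of many contributing to its support's sum. This suggests a greedy root-to-leaf assignment: root $T$ at a centroid $r$, order edges by depth, and place labels so that for every non-leaf $v$ the vertex sum $\phi(v)$ is forced into a disjoint range determined by $v$'s depth and degree. Combined with the standard trick of separately labeling ``odd-layer'' and ``even-layer'' edges with the bottom and top halves of $[1,m]$, this should at least handle trees whose internal-vertex degree sequence is sufficiently heterogeneous.

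The main obstacle --- and the reason \Href{conj2} has resisted a full proof since \cite{HR90} --- is collisions between two internal vertices that carry the same degree and sit in structurally similar positions but in different branches of $T$. An extension of an antimagic labeling of $T'$ fixes the sum at the new leaf and at its support, but it cannot always prevent the shifted sum of some unrelated internal vertex from coinciding with another. Overcoming this seems to require a nonlocal swap argument: after the naive extension fails, exchange a pair of labels on two well-chosen edges so that exactly the colliding sums are separated while every other sum is perturbed by a known amount that still keeps the labeling antimagic.

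I would first verify this template on the double-star-plus-$P_3$ case the authors actually treat, then on caterpillars, spiders, and subdivided stars, calibrating the swap argument against explicit small examples. The step I expect to be genuinely hard is proving that a suitable swap \emph{always} exists when the naive extension fails; in the worst case (highly symmetric trees with many repeated degrees), any single swap may create new collisions, so the real content of a proof of \Href{conj2} is likely an existence statement for a long enough sequence of compensating swaps, or equivalently a Hall-type matching between would-be collisions and available label exchanges.
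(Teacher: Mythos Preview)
The statement you are attempting to prove is not a theorem of the paper; it is Hartsfield and Ringel's 1990 conjecture, which the authors cite purely as background and motivation. The paper makes no claim to prove \Cref{conj2} and contains no proof of it --- indeed, the conjecture remains open. The paper's actual contribution is Theorem~\ref{main}, which determines $\tau(S_{a,b})$ for all double stars $S_{a,b}$, a result about disconnected graphs of the form $S_{a,b}+cP_3$ rather than about arbitrary trees.

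Your proposal is candid about this: you explicitly identify the collision-avoidance step as ``the reason \Cref{conj2} has resisted a full proof since \cite{HR90}'' and concede that the crux would be a Hall-type existence statement for compensating swaps that you do not supply. So what you have written is a research plan, not a proof. The inductive template (delete a leaf, shift labels, reinsert) is natural and has been tried; the known obstruction is exactly the one you flag --- trees with many degree-$2$ vertices, where the shift-by-degree perturbation is uniform and swaps tend to create as many collisions as they resolve. Partial results such as \cite{KLR09,LWZ14} (trees with at most one degree-$2$ vertex) and \cite{LMST21} (caterpillars) succeed precisely because the degree structure gives enough room to separate sums; no one has found the matching argument you describe for general trees. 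Until that gap is closed, this remains a conjecture, and there is nothing in the paper to compare your proposal against.
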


Although many significant classes of graphs such as dense graphs~\cite{AKLRY04}, regular graphs~\cite{CLZ15,CLPZ16}, trees with at most one vertex of degree two~\cite{KLR09,LWZ14}, and caterpillars~\cite{LMST21} have been proved to be antimagic, the above conjectures remain widely open. 
Besides, various types of antimagic problems have been come up with and studied. 
In~\cite{G19}, a comprehensive dynamic survey of the related results and the latest developments is maintained. 
Notice that the assumption of connectedness in Conjecture~\ref{conj} cannot be dropped.  
There exist disconnected graphs which are not antimagic. 
In~\cite{CCLP21}, Chang, Chen, Pan, and the first author showed that for any graph $G$, the disjoint union of $G$ and sufficiently many $P_3$'s, a path on $3$ vertices, is not antimagic. Chaves, Le, Lin, Liu, and Shurman~\cite{CLLLS23} formulated the following parameter for graphs and studied it for many graphs. 

\begin{definition}{\rm\cite{CLLLS23}}
For a graph $G$,  let $\tau(G)$ be the maximum integer such that the disjoint union of $G$ and $c$ copies of $P_3$, is antimagic for $0\le c\le \tau(G)$. 
If $G$ is not antimagic, then $\tau(G)=-\infty$.
\end{definition}

Given a graph $G$, let $n$, $m$, $\ell$, and 
 $t$ be the number of vertices, edges, internal edges, and components of $G$ isomorphic to $P_3$, respectively. Define 
\[
\beta(G)=\min\{
\lfloor(3+2\sqrt{2})(m-n)+(1+\sqrt{2})(m+\tfrac{1}{2})\rfloor,  2m+5(\ell-t)+1\}.
\}
\]
Chaves {\em et al}.~\cite{CLLLS23} proved the following Theorem~\ref{theorem} and proposed several problems.  
\begin{theorem}[\rm Corollary 2,~\cite{CLLLS23}]\label{theorem}
For a graph $G$, $\tau(G)\le \beta(G)$.
\end{theorem}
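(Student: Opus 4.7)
My plan is to assume for contradiction that $G^*:=G\cup cP_3$ has an antimagic labeling $f:E(G^*)\to[1,m+2c]$ and extract two numerical obstructions on $c$; the stated bound $\beta(G)$ is the smaller of the two thresholds. Throughout set $m^* := m+2c$ and $n^* := n+3c$, and note that $G^*$ has $t+c$ components isomorphic to $P_3$, contributing $2(t+c)$ leaves whose vertex sums are single labels and $t+c$ middle vertices whose vertex sums are sums of exactly two labels.

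For the first (Pell-type) threshold I will compare the total of the vertex sums against the sum of any $n^*$ distinct positive integers. Double-counting edge labels gives $\sum_{v\in V(G^*)}\phi(v)=2\sum_{i=1}^{m^*} i = m^*(m^*+1)$, while distinctness forces this total to be at least $n^*(n^*+1)/2$, so
\[
(n+3c)(n+3c+1)\;\le\;2(m+2c)(m+2c+1).
\]
This is a quadratic inequality in $c$ whose discriminant simplifies, with $u:=3m-2n$, to $8u(u+1)+1=2(2u+1)^2-1$, a classical negative-Pell expression. Solving for $c$ and tracking the correct floor yields $c \le \lfloor(3+2\sqrt{2})(m-n)+(1+\sqrt{2})(m+\tfrac12)\rfloor$; the constant $3+2\sqrt{2}=(1+\sqrt{2})^2$ is the square of the fundamental unit of $\mathbb{Z}[\sqrt{2}]$ and appears precisely because the principal part of the discriminant is $2(2u+1)^2$.

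For the second (structural) threshold I will exploit that for each $P_3$-component of $G^*$ with edge labels $a,b$ the middle-vertex sum $a+b$ must differ from every leaf sum elsewhere in $G^*$. Since every pendant edge of $G^*$ contributes a leaf sum equal to its own label, each middle sum $a+b$ must either exceed $m^*$ or coincide with the label of one of the $\ell$ internal edges of $G$. The middle sums must additionally be pairwise distinct and must avoid the vertex sums at the vertices of degree at least $2$ inside the non-$P_3$ components of $G$; carefully accounting for these forbidden labels and forbidden pair-sums, where each internal edge of $G$ rules out a bounded number of labels and pair-sums and each of the $t$ pre-existing $P_3$-components of $G$ contributes a fixed correction, leads by pigeonhole to $c \le 2m + 5(\ell - t) + 1$.

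The main obstacle of the proof lies in this structural step: extracting exactly the constants $5$ and $+1$ requires tight bookkeeping of how each edge label can simultaneously realize a leaf sum, a middle sum, and participate in the pair-sum of another $P_3$, as slackness anywhere inflates the constant. Taking the minimum of the two thresholds then yields $c \le \beta(G)$, and therefore $\tau(G)\le\beta(G)$.
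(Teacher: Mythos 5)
First, a point of comparison: the paper does not prove this statement at all --- it is imported verbatim from \cite{CLLLS23} (their Corollary~2) --- so the only in-paper material your argument can be measured against is the specialized form of the same technique in Lemmas~\ref{upb2} and~\ref{upb3}. Your first (Pell-type) threshold is essentially correct: double counting gives $(n+3c)(n+3c+1)\le 2(m+2c)(m+2c+1)$, the discriminant of the resulting quadratic in $c$ is indeed $8u(u+1)+1=2(2u+1)^2-1$ with $u=3m-2n$, and bounding $\sqrt{2(2u+1)^2-1}<\sqrt{2}\,(2u+1)$ yields exactly $c<(3+2\sqrt{2})(m-n)+(1+\sqrt{2})(m+\tfrac{1}{2})$, hence the stated floor. (One caveat: you need every vertex sum to be a positive integer, i.e.\ no isolated vertices, in order to start the lower bound at $1+2+\cdots+n^*$; that hypothesis should be made explicit.)

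The genuine gap is in the second threshold. Your opening observation is right: a middle sum $\phi(u_i)\le m+2c$ must equal some label, that label cannot sit on a pendant edge, and distinct middle sums hit distinct internal edges, so at most $\ell$ of the $t+c$ middle sums can be $\le m+2c$. But the mechanism you then invoke --- tallying ``forbidden labels and forbidden pair-sums'' and applying pigeonhole --- is not what produces $2m+5(\ell-t)+1$, and I do not see how it could produce the constant $5$. The missing step is an averaging comparison on the remaining $q:=c+t-\ell$ middle vertices: their sums are pairwise distinct integers exceeding $m+2c$, so they total at least $q(m+2c)+\tfrac{1}{2}q(q+1)$, while they are composed of $2q$ distinct labels each at most $m+2c$, so they total at most $2q(m+2c)-q(2q-1)$; comparing the two gives $5q\le 2(m+2c)+1$, i.e.\ $c\le 2m+5(\ell-t)+1$. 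This two-sided estimate is precisely the device used in Lemmas~\ref{upb2} and~\ref{upb3} of the present paper (lower-bounding $\sum_i\phi(u_i)$ by consecutive integers above $m+2c$ and upper-bounding it by the largest available labels), and it is where the constants $5$ and $+1$ --- which you yourself flag as the ``main obstacle'' --- actually come from. Until that comparison is carried out, the second half of your argument is a plan rather than a proof.
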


\begin{conjecture}[\rm Conjecture 4,~\cite{CLLLS23}]\label{conjecture}
For a graph $G$, the union of $G$ and $c$ copies of $P_3$ is antimagic if and only if $c\le \tau(G)$.
\end{conjecture}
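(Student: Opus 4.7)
The ``if'' direction is immediate from the definition of $\tau(G)$: if $c\le\tau(G)$, then $G\cup cP_3$ is antimagic (the case $\tau(G)=-\infty$ being vacuous). The substantive direction is the contrapositive of the other implication: \emph{if $c>\tau(G)$, then $G\cup cP_3$ is not antimagic}. Since $G\cup(\tau(G)+1)P_3$ is already known to be non-antimagic by the definition of $\tau(G)$, it suffices to establish the monotonicity claim that if $G\cup cP_3$ is not antimagic then neither is $G\cup(c+1)P_3$, and then iterate.

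My plan is to prove this monotonicity by contraposition: assume $f\colon E(G\cup(c+1)P_3)\to[1,m+2c+2]$ is an antimagic labeling and construct an antimagic labeling of $G\cup cP_3$. The natural attempt is to delete one $P_3$-component, say one whose edges carry labels $a<b$, and compress the remaining labels by the order-preserving bijection $\pi\colon[1,m+2c+2]\setminus\{a,b\}\to[1,m+2c]$. Setting $f'=\pi\circ f$ on the surviving edges, a direct computation gives
\[
\phi_{f'}(v)=\phi_{f}(v)-\alpha_{a}(v)-\alpha_{b}(v),
\]
where $\alpha_{x}(v)=|\{e\ni v:f(e)>x\}|$. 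Thus $f'$ is antimagic precisely when the correction map $v\mapsto\alpha_{a}(v)+\alpha_{b}(v)$ does not identify two distinct $\phi_{f}$-values on the surviving vertex set.

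The heart of the argument is then to show that among the $c+1$ candidate $P_3$-components one can always choose a ``good'' pair $(a,b)$. Each candidate deletion produces its own correction function, so the set of bad choices is cut out by a family of linear equations in the $\phi_{f}$-values; a double-counting comparison between the $c+1$ possible deletions and the pairs of surviving vertices whose $\phi_{f}$-values lie close enough together to admit a collision should rule out the ``all deletions are bad'' scenario once $c$ is sufficiently large, while the finitely many remaining small values of $c$ can be controlled using the bound $\tau(G)\le\beta(G)$ from \Href{theorem}. The hardest step --- and, I expect, the reason this statement has remained a conjecture in general --- is ruling out the degenerate case in which every removable $P_3$ introduces a collision; there is no obvious structural obstruction preventing this, and a fully general proof appears to require exploiting properties of $G$ beyond its raw parameters, in the spirit of the double-star analysis carried out in the remainder of this paper.
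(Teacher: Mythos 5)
This statement is an open conjecture (Conjecture~4 of~\cite{CLLLS23}); the paper neither proves it in general nor claims to, so there is no ``paper's own proof'' to match. What the paper does is verify the conjecture for the single family $G=S_{a,b}$, and it does so by a route that has nothing to do with your monotonicity scheme: the counting arguments of Lemmas~\ref{upb2} and~\ref{upb3} are of the form ``if $S_{a,b}+cP_3$ is antimagic then $c\le(\text{bound})$,'' which rules out \emph{every} $c$ above the threshold in one stroke (no induction on $c$ is needed), and Section~3 supplies an explicit antimagic labeling for every $c$ at or below the threshold. That two-sided pincer --- a universal upper bound plus a construction for each admissible $c$ --- is the only known template for establishing the conjecture on a concrete family.

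Your reduction of the hard direction to the monotonicity claim ``$G+cP_3$ not antimagic $\Rightarrow$ $G+(c+1)P_3$ not antimagic'' is a correct reformulation, and the compression identity $\phi_{f'}(v)=\phi_f(v)-\alpha_a(v)-\alpha_b(v)$ is right. But the step you defer --- always finding a $P_3$-component whose deletion creates no collision among the surviving vertex sums --- is the entire content of the conjecture, and you concede you cannot close it. Moreover, the double-counting heuristic you sketch points the wrong way: there are only $c+1$ candidate deletions, while each deletion shifts every surviving vertex sum by an amount between $0$ and $2\deg(v)$, so the number of pairs of vertices whose sums are close enough to collide grows at least linearly in $c$ (the $2c$ pendant vertices of the surviving $P_3$'s alone have sums packed into $[1,m+2c]$). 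No pigeonhole emerges, and the bound $\tau(G)\le\beta(G)$ from \Href{theorem} does not help with the small-$c$ cases either, since those are exactly the cases where antimagic labelings of $G+(c+1)P_3$ do exist and the deletion argument must actually be carried out. So the proposal has a genuine, self-acknowledged gap at its only substantive step; it is an outline of a possible attack, not a proof.
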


A {\em jellyfish} $J(C_k,r)$ is a graph obtained by attaching $r$ pendent edges to each vertex of the cycle $C_k$, In~\cite{CLLLS23}, Chaves {\em et al}. proved that $\tau(C_n)=\beta(C_n)$ for $3\le n\le 9$ and also $\tau(J(C_3,r))=\beta(J(C_3,r))$ for $r\ge 11$.
In addition, Conjecture~\ref{conjecture} holds for these graphs. 
A {\em star} $S_{n}$ is a tree with $n$ pendent edges incident to a common vertex. 
Much earlier, Chen, Huang, Lin, Shang, and Lee~\cite{CHLSL20} determined the value of $\tau(S_{n})$. 
They showed not only $\tau(S_n)=\beta(S_n)$, but also that $S_n$ satisfies Conjecture~\ref{conjecture}. 
A {\em double star} $S_{a, b}$ is a tree consisting of an internal edge with $a$ edges incident to one endpoint and $b$ edges incident to the other endpoint of the internal edge. 
Li~\cite{L19} proved that every {\em balanced double star} $S_{a,a}$ satisfies both $\tau(S_{a,a})=\beta(S_{a,a})$ and Conjecture~\ref{conjecture} in his master thesis.
The graphs $G$ for which $\tau(G)$ is known to date all satisfy $\tau(G)=\beta(G)$. Therefore, Cheves {\em et al.} asked the following question. 

\begin{question}[\rm Question 2,~\cite{CLLLS23}]\label{question}
Does there exist a graph $G$ without isolated vertices or $P_2$ as components with $\beta(G)\ge  0$ and $\tau(G) < \beta(G)$?
\end{question}

In this paper, we continue the study of the antimagicness of $S_{a,b}$ union the $P_3$'s.
We have completely determined $\tau(S_{a,b})$ for all $a$ and $b$, and also have proved that Conjecture~\ref{conjecture} holds for double stars by constructing all the antimagic labelings. 
Notice that Theorem~\ref{theorem} gives that  
\begin{equation}\label{upb0}
\tau(S_{a, b})\le 2m+6.
\end{equation}
for $m\ge 26$,  
and  
\begin{equation}\label{upb01}
\tau(S_{a, b})\le (1+\sqrt{2})(m-\tfrac{3}{2})-1
\end{equation}
for $m\le 25$,  where $m=a+b+1$ is the number of edges of $S_{a, b}$. 
It turns out that Inequalities (\ref{upb0}) and (\ref{upb01}) are strict for certain $a$ and $b$. 
As a consequence, we provide examples for which $\tau(S_{a,b})<\beta(S_{a,b})$, which answer Question~\ref{question}.
Our main result is the following theorem.

\begin{theorem}\label{main}
For the double star $S_{a, b}$ with $a\le b$ and $m=a+b+1$,  define
\[
\tau_0=
 \lfloor(1+\sqrt{2})(m-\tfrac{3}{2})\rfloor-1
\mbox{ and }
\tau_i=2m+i
\mbox{ for }
1\le i\le 6.
\]
Then we have 
\[
\tau(S_{a, b})=\left\{
\begin{array}{ll}
 \min\{\tau_0, \tau_1\},       &1\le a\le 2  \\
 \min\{\tau_0, \tau_{a-1}\},     &2\le a\le 6  \\
 \min\{\tau_0, \tau_5\},       & a=7\mbox{ and }b \le 21\\
 \min\{\tau_0, \tau_6\},       & a=7\mbox{ and }b \ge 22\mbox{ or } a \ge 8\\
\end{array}
\right.
\]
\end{theorem}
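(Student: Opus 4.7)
The proof naturally splits into an upper-bound phase and an explicit construction phase. The upper bound $\tau(S_{a,b})\le \tau_0$ for small $m$ and the upper bound $\tau(S_{a,b})\le \tau_6=2m+6$ for large $m$ follow directly from Theorem~\ref{theorem} via inequalities~(\ref{upb01}) and~(\ref{upb0}). What remains on the upper-bound side is to sharpen $\tau_6$ to $\tau_{a-1}$ for $1\le a\le 6$ and to $\tau_5$ in the sub-case $a=7$, $b\le 21$. I would do this by contradiction: assume an antimagic labeling $f$ of $S_{a,b}\cup cP_3$ with $c=\tau_{a-1}+1$, and let $u,v$ be the centers with $a$ and $b$ pendants respectively. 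Since $\phi(u)$ is a sum of $a+1$ distinct labels chosen from $[1,m+2c]$, it is trapped in a short interval whose length depends only on $a$. The $a+b+2c$ leaves contribute distinct label-sums inside $[1,m+2c]$, and the $c$ middle vertices of the $P_3$'s contribute sums of pairs of labels that must be pairwise distinct and distinct from all leaf sums. A packing/pigeonhole argument comparing the number of forced sums with the length of the accessible interval around $\phi(u)$ yields a contradiction. The case $a=7$ requires an extra packing constraint involving $\phi(v)$, which is what produces the transition at $b=21$.

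For the construction phase, I would exhibit an antimagic labeling of $S_{a,b}\cup cP_3$ for every $0\le c\le \tau(S_{a,b})$ via a parametric template. Partition $[1,m+2c]$ into three bands. The smallest labels are assigned to a chosen mixture of pendants of $S_{a,b}$ and edges of the $P_3$'s so that the resulting leaf sums fill a prescribed initial segment. The middle band is paired across the two edges of each $P_3$ so that the $c$ middle-vertex sums form a consecutive block placed just above the leaf sums. The largest labels go to the remaining pendants of $u$ and $v$, and the internal edge $uv$ receives a final label chosen so that $\phi(u)\neq \phi(v)$ and both exceed every other sum. The explicit pairing formulas depend on the parities of $a,b,c$, on whether $a=b$, and on how $c$ compares with $\tau_{a-1}$ and $\tau_0$, yielding several structurally similar sub-cases.

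The principal obstacle is the construction when $c$ is at or just below the claimed value of $\tau(S_{a,b})$. In that regime the labels are scarce and many sums are forced into overlapping short intervals, so the three-band template must be fine-tuned with residue conditions in $a,b,c$ modulo small integers. The boundary between $a=7$, $b=21$ and $a=7$, $b=22$ is precisely the point at which a single additional $P_3$ breaks one pairing scheme and necessitates another, mirroring the transition seen in the upper-bound argument. Once the correct parametric families of pairings are isolated, verifying that all $a+b+3c+2$ vertex sums are pairwise distinct becomes a routine arithmetic check.
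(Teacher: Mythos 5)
Your high-level division into an upper-bound phase and a construction phase matches the paper, but both halves of your sketch are missing the specific mechanisms that make the argument work, and one claim is false as stated. On the upper-bound side, $\phi(u)$ is \emph{not} trapped in a short interval depending only on $a$: it is a sum of $a+1$ distinct labels from $[1,m+2c]$ and can range widely. The actual engine is a dichotomy about the $P_3$-centers: any vertex sum $s\le m+2c$ coincides with the label $s$ of some edge, and hence with the vertex sum of that edge's pendant endpoint unless that edge is the unique internal edge $u_{c+1}u_{c+2}$; therefore \emph{at most one} center $u_i$ ($1\le i\le c$) can have $\phi(u_i)\le m+2c$, and if so $\phi(u_i)=f(u_{c+1}u_{c+2})$. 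This forces the remaining $c-1$ (or $c$) center sums, together with $\phi(u_{c+1})$, to be distinct integers exceeding $m+2c$, so their total is at least $\sum_i(m+2c+i)$, while it is at most the sum of the largest $2c+a+1$ labels (this is where the dependence on $a$ enters, via $\deg(u_{c+1})=a+1$, not via any interval around $\phi(u)$). Comparing the two totals gives a quadratic in $c$ whose positive root is just below $2m+a$, yielding $c\le 2m+a-1$ for $2\le a\le 6$; the $a=7$, $b\le 21$ case is the same inequality evaluated at $a=7$ and compared with $2m+6$ (it wins exactly when $m\le 29$), not a separate packing constraint on $\phi(v)$. Without the dichotomy your pigeonhole argument does not start, since a priori many centers could have small sums.

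On the construction side, the three-band template is the right picture, but the case structure is not governed by parities of $a,b,c$ or by whether $a=b$. The quantitative bottleneck is that the maximum number of disjoint pairs from $[1,k]$ with pairwise distinct sums all exceeding $k$ is $\lfloor(2k+1)/5\rfloor$ (the paper's Lemma~\ref{2subset}, which is best possible); with $k=m+2c$ this equals at least $c$ when $c\le 2m+1$ but only $c-1$ when $2m+2\le c\le 2m+6$. So for large $c$ one $P_3$ must receive the pair $\{1,w-1\}$ where $w$ is the internal-edge label, and one must then find an $(a+1)$-subset $W$ of the leftover labels with $\|W\|=k+c\approx 7m$ to serve as $f(E_A)\cup f(E_I)$; the existence of such a subset (via an interpolation lemma for sums of subsets of consecutive integers) is exactly what requires $a$ to be large, and its failure for small $a$ is why the upper bounds $\tau_{a-1}$ are tight. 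A further labor-saving ingredient you would need, or would have to replace by ad hoc checks, is the observation that once $\{\phi(u_1),\ldots,\phi(u_{c+1})\}=f(E_I)\cup[m+2c+1,m+3c]$ and $c\le\tau_0$, the remaining sum $\phi(u_{c+2})$ is automatically larger than everything else. As written, your proposal would not let a reader reconstruct either the exact thresholds in the theorem or the constructions at the extremal values of $c$.
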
 

Table~\ref{tab:upb} is the list of all $\tau(S_{a,b})$ and the numerical values of $\tau_0$ for small $m$. 
In each row, the number in the cell in the most right is the upper bound of $\tau(S_{a,b})$ obtained from Inequalities (\ref{upb0}) and (\ref{upb01}).
The gray cells show the improvement of the bounds for small $a$. 
We will derive the new upper bounds for $\tau(S_{a,b})$ in the next section. 
The construction methods of the antimagic labelings for $1\le c\le \tau(S_{a,b})$ will be given in Section 3. 
Particularly, some of our antimagic labelings have the property that all vertex sums are exactly 1 through the number of vertices. We will present these special cases and discuss the corresponding research in the last section.

\clearpage

\begin{table}[ht]
\begin{center}
    
\begin{tabular}{|c|c|c|c|c|c|c|c|c|c|c|c|c|c|c|}
\hline 
\diagbox{$m$}{$\tau(S_{a,b})$}{$a$}&  1 & 2 &  3 & 4  & 5 & 6 & 7 & 8 & 9 & 10 & 11 & 12 & 13 & $\ge $14 \\
\hline 3 & $\tau_0 $ &  &  &  &  &  &  &  &  &  &  &  &  &  \\
\hline 4 & $\tau_0 $ &  &  &  &  &  &  &  &  &  &  &  &  &  \\
\hline 5 & $\tau_0 $ & $\tau_0$ &  &  &  &  &  &  &  &  &  &  &  &  \\
\hline 6 & $\tau_0 $ & $\tau_0$ &  &  &  &  &  &  &  &  &  &  &  &  \\
\hline 7 & $\tau_0 $ & $\tau_0$ & $\tau_0$ &  &  &  &  &  &  &  &  &  &  &  \\
\hline 8 & $\tau_0 $ & $\tau_0$ & $\tau_0$ &  &  &  &  &  &  &  &  &  &  &  \\
\hline 9 & $\tau_0 $ & $\tau_0$ & $\tau_0$ & $\tau_0$ &  &  &  &  &  &  &  &  &  &  \\
\hline 10 & $\tau_0 $ & $\tau_0$ & $\tau_0$ & $\tau_0$ &  &  &  &  &  &  &  &  &  &  \\
\hline 11 & $\tau_0 $ & $\tau_0$ & $\tau_0$ & $\tau_0$ & $\tau_0$ &  &  &  &  &  &  &  &  &  \\
\hline $12$ & $\tau_0 $ & $\tau_0$ & $\tau_0$ & $\tau_0$ & $\tau_0$ &  &  &  &  &  &  &  &  &  \\
\hline $ 13$ & $\tau_0 $ & $\tau_0$ & $\tau_0$ & $\tau_0$ & $\tau_0$ & $\tau_0$ &  &  &  &  &  &  &  &  \\
\hline
\hline $14$ 
& $\tau_1^*$ & $\tau_1^*$ & $\tau_1^*$ & $\tau_1^*$ & $\tau_1^*$ & $\tau_1^*$ &&&&&&&&\\
\hline $15$ 
& $\tau_1^*$ & $\tau_1^*$ & $\tau_1^*$ & $\tau_1^*$ & $\tau_1^*$ & $\tau_1^*$ & $\tau_1^*$ &&&&&&&\\
\hline $16$ 
&\cellcolor{lightgray} $\tau_1$ &\cellcolor{lightgray} $\tau_1$ & $\tau_2^*$ & $\tau_2^*$ & $\tau_2^*$ & $\tau_2^*$ & $\tau_2^*$ &&&&&&&\\
\hline $17$ 
&\cellcolor{lightgray} $\tau_1$ &\cellcolor{lightgray} $\tau_1$ & $\tau_2^*$ & $\tau_2^*$ & $\tau_2^*$ & $\tau_2^*$ & $\tau_2^*$ & $\tau_2^*$ &&&&&&\\
\hline $18$ 
&\cellcolor{lightgray} $\tau_1$ &\cellcolor{lightgray} $\tau_1$ & $\tau_2^*$ & $\tau_2^*$ & $\tau_2^*$ & $\tau_2^*$ & $\tau_2^*$ & $\tau_2^*$ &&&&&&\\
\hline $19$ 
&\cellcolor{lightgray} $\tau_1$ &\cellcolor{lightgray} $\tau_1$ & \cellcolor{lightgray} $\tau_2$ & $\tau_3^*$ & $\tau_3^*$ & $\tau_3^*$ & $\tau_3^*$ & $\tau_3^*$ & $\tau_3^*$ &&&&&\\
\hline $20$ 
&\cellcolor{lightgray} $\tau_1$ &\cellcolor{lightgray} $\tau_1$ &\cellcolor{lightgray} $\tau_2$ & $\tau_3^*$ & $\tau_3^*$ & $\tau_3^*$ & $\tau_3^*$ & $\tau_3^*$ & $\tau_3^*$ &&&&&\\
\hline $21$ 
&\cellcolor{lightgray} $\tau_1$ &\cellcolor{lightgray} $\tau_1$ &\cellcolor{lightgray} $\tau_2$ &\cellcolor{lightgray} $\tau_3$ & $\tau_4^*$ & $\tau_4^*$ & $\tau_4^*$ & $\tau_4^*$ & $\tau_4^*$ & $\tau_4^*$ &&&&\\
\hline $22$ 
&\cellcolor{lightgray} $\tau_1$ &\cellcolor{lightgray} $\tau_1$ & \cellcolor{lightgray} $\tau_2$ &\cellcolor{lightgray} $\tau_3$ & $\tau_4^*$ & $\tau_4^*$ & $\tau_4^*$ & $\tau_4^*$ & $\tau_4^*$ & $\tau_4^*$ &&&&\\
\hline $23$ 
&\cellcolor{lightgray} $\tau_1$ &\cellcolor{lightgray} $\tau_1$ &\cellcolor{lightgray} $\tau_2$ &\cellcolor{lightgray} $\tau_3$ & $\tau_4^*$ & $\tau_4^*$ & $\tau_4^*$ & $\tau_4^*$ & $\tau_4^*$ & $\tau_4^*$ &$\tau_4^*$ &&&\\
\hline $24$ 
&\cellcolor{lightgray} $\tau_1$ &\cellcolor{lightgray} $\tau_1$ &\cellcolor{lightgray} $\tau_2$ &\cellcolor{lightgray} $\tau_3$ &\cellcolor{lightgray} $\tau_4$ & $\tau_5^*$ & $\tau_5^*$ & $\tau_5^*$ & $\tau_5^*$ & $\tau_5^*$ & $\tau_5^*$ &  &&\\
\hline $25$ 
&\cellcolor{lightgray} $\tau_1$ &\cellcolor{lightgray} $\tau_1$ &\cellcolor{lightgray} $\tau_2$ &\cellcolor{lightgray} $\tau_3$ &\cellcolor{lightgray} $\tau_4$ & $\tau_5^*$ & $\tau_5^*$ & $\tau_5^*$ & $\tau_5^*$ & $\tau_5^*$ & $\tau_5^*$ & $\tau_5^*$ &&\\
\hline
\hline $26$ 
&\cellcolor{lightgray} $\tau_1$ &\cellcolor{lightgray} $\tau_1$ &\cellcolor{lightgray} $\tau_2$ &\cellcolor{lightgray} $\tau_3$ &\cellcolor{lightgray} $\tau_4$ &\cellcolor{lightgray} $\tau_5$ &\cellcolor{lightgray} $\tau_5$ & $\tau_6$ & $\tau_6$ & $\tau_6$ & $\tau_6$ & $\tau_6$ &  &\\
\hline $27$ 
&\cellcolor{lightgray} $\tau_1$ &\cellcolor{lightgray} $\tau_1$ &\cellcolor{lightgray} $\tau_2$ &\cellcolor{lightgray} $\tau_3$ &\cellcolor{lightgray} $\tau_4$ &\cellcolor{lightgray} $\tau_5$ &\cellcolor{lightgray} $\tau_5$ & $\tau_6$ & $\tau_6$ & $\tau_6$ & $\tau_6$ & $\tau_6$ & $\tau_6$ &\\
\hline $28$ 
&\cellcolor{lightgray} $\tau_1$ &\cellcolor{lightgray} $\tau_1$ &\cellcolor{lightgray} $\tau_2$ &\cellcolor{lightgray} $\tau_3$ &\cellcolor{lightgray} $\tau_4$ &\cellcolor{lightgray} $\tau_5$ &\cellcolor{lightgray} $\tau_5$ & $\tau_6$ & $\tau_6$ & $\tau_6$ & $\tau_6$ & $\tau_6$ & $\tau_6$ &  \\
\hline $29$ 
&\cellcolor{lightgray} $\tau_1$ &\cellcolor{lightgray} $\tau_1$ &\cellcolor{lightgray} $\tau_2$ &\cellcolor{lightgray} $\tau_3$ &\cellcolor{lightgray} $\tau_4$ &\cellcolor{lightgray} $\tau_5$ &\cellcolor{lightgray} $\tau_5$ & $\tau_6$ & $\tau_6$ & $\tau_6$ & $\tau_6$ & $\tau_6$ & $\tau_6$ & $\tau_6$ \\
\hline $\ge 30$ 
&\cellcolor{lightgray} $\tau_1$ &\cellcolor{lightgray} $\tau_1$ &\cellcolor{lightgray} $\tau_2$ &\cellcolor{lightgray} $\tau_3$ &\cellcolor{lightgray} $\tau_4$ &\cellcolor{lightgray} $\tau_5$ & $\tau_6$ & $\tau_6$ & $\tau_6$ & $\tau_6$ & $\tau_6$ & $\tau_6$ & $\tau_6$ & $\tau_6$ \\
%\hline $\vdots$
%&\cellcolor{lightgray} $\vdots$ &\cellcolor{lightgray} $\vdots$ &\cellcolor{lightgray} $\vdots$ &\cellcolor{lightgray} $\vdots$ & \cellcolor{lightgray} $\vdots$ &\cellcolor{lightgray} $\vdots$ & $\vdots$ & $\vdots$ & $\vdots$ & $\vdots$ & $\vdots$ & $\vdots$ & $\vdots$ & $\vdots$ \\
\hline
\end{tabular}
\end{center}
(1) $m=a+b+1$. (2) $\tau_i^*$ means $\tau_i=\tau_0$.\bigskip

The exact values of $\tau_0$ for $m\le 13$: 
\begin{center}
\begin{tabular}{|c|c|c|c|c|c|c|c|c|c|c|c|}
\hline
$m$ &3&4&5&6&7&8&9&10&11&12&13 \\
\hline $\tau_0$ & 2 & 5& 7& 9&12&14&17&19&21&24& 26\\
\hline
\end{tabular}    
\end{center}
\caption{$\tau(S_{a, b})$ for $1\le a\le b$.}
    \label{tab:upb}
\end{table}

\clearpage

\section{Upper bounds}

Throughout the paper, $G+H$ stands for the disjoint union of two graphs $G$ and $H$, and $cG$ is the disjoint union of $c$ copies of $G$. 
For $S_{a, b}+cP_3$, we use $u_1,u_2,\ldots, u_c$ to denote the internal vertices of the $P_3$'s and  $u_{c+1}$ and $u_{c+2}$ to denote the internal vertices in $S_{a,b}$ incident to $a$ and $b$ pendent edges, respectively. 
For $1\le i \le c$, let $E_i$ consist of the pendent edges incident to $u_i$. 
Moreover, $E_A$ and $E_B$ are the sets of pendent edges incident to $u_{c+1}$ and $u_{c+2}$, respectively, and $E_I=\{u_{c+1}u_{c+2}\}$.

\begin{figure}[ht]
    \centering
\begin{picture}(320, 55)
\textcolor{gray}{
\put(15,25){\oval(15,55)}
\put(55,25){\oval(15,55)}
\put(95,25){\oval(15,55)}
\put(150,25){\oval(15,55)}
\put(195,25){\oval(30,55)}
\put(295,25){\oval(30,55)}
\put(245,25){\oval(60,10)}
}
\put(15, 5){\circle*{4} }
\put(15, 25){\circle*{4} $u_1$}
\put(25,5){$E_1$}
\put(65,5){$E_2$}
\put(105,5){$E_3$}
\put(160,5){$E_c$}
\put(210,0){$E_A$}
\put(240,35){$E_I$}
\put(310,0){$E_B$}
\put(15, 45){\circle*{4} }
\put(15, 5){\line(0, 1){40}}
\put(55, 5){\circle*{4} }
\put(55, 25){\circle*{4} $u_2$}
\put(55, 45){\circle*{4} }
\put(55, 5){\line(0, 1){40}}
\put(95, 5){\circle*{4} }
\put(95, 25){\circle*{4} $u_3$}
\put(95, 45){\circle*{4} }
\put(95, 5){\line(0, 1){40}}
\put(150, 5){\circle*{4} }
\put(150, 25){\circle*{4} $u_c$}
\put(150, 45){\circle*{4} }
\put(150, 5){\line(0, 1){40}}
\put(120, 25){$\ldots$}
%%%%%%
\put(190, 5){\circle*{4}}
\put(190, 12){$\vdots$}
\put(190, 30){\circle*{4}}
\put(190, 45){\circle*{4}}
\put(220, 25){\circle*{4}}
\put(220, 25){\line(-3, 2){30}}
\put(220, 25){\line(-3, -2){30}}
\put(220, 25){\line(-6, 1){30}}
\put(220, 25){\line(1, 0){50}}
\put(270, 25){\line(3, 2){30}}
\put(270, 25){\line(3, -2){30}}
\put(270, 25){\line(6, 1){30}}
\put(270, 25){\circle*{4}}
\put(300, 5){\circle*{4} }
\put(300, 12){$\vdots$}
\put(300, 30){\circle*{4} }
\put(300, 45){\circle*{4} }
\put(220, 12){$u_{c+1}$}
\put(250, 12){$u_{c+2}$}
\put(315, 45){$S_{a, b}$}
\end{picture}
    \caption{$S_{a, b}+cP_3$}
    \label{notatopm}
\end{figure}
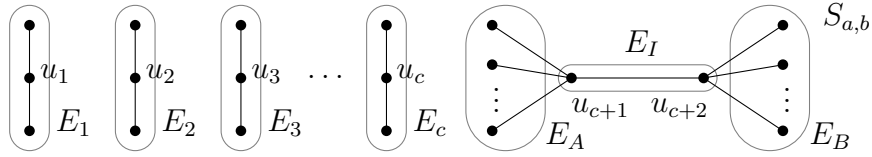

Denote $m=a+b+1$ the number of edges of $S_{a,b}$.
\begin{lemma}\label{upb2}%%lemaa 2.4 in Yang's thesis.
If $S_{1, b}+cP_3$ is antimagic, then $c\le 2m+1$.
\end{lemma}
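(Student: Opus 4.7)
The plan is to control $c$ by analyzing the vertex sums at the $c+1$ degree-$2$ vertices, namely the centers $u_1,\ldots,u_c$ of the $P_3$'s and the vertex $u_{c+1}$. Let $N = 2c + m$ be the total number of edges, write $I$ for the label of the internal edge, $p$ for the label of the pendant at $u_{c+1}$, $A$ for the sum of the $2c$ labels on the $P_3$-pendant edges, and $B$ for the sum of the $b$ labels on the pendants incident to $u_{c+2}$, so that $A + B + p + I = N(N+1)/2$. The $N-1$ leaves contribute, as vertex sums, exactly the labels of their incident pendant edges, which is $\{1,\ldots,N\}\setminus\{I\}$. Consequently each degree-$2$ sum lies in $\{I\}\cup[N+1,2N-1]$. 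Since $\phi(u_{c+1})=I+p>I$, the sum at $u_{c+1}$ is at least $N+1$, and by distinctness at most one of the $P_3$-center sums can equal $I$.

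In Case 1, all $c+1$ degree-$2$ sums are at least $N+1$ and distinct, so
\[
\tfrac{N(N+1)}{2} - B \;=\; A + I + p \;=\; \sum_{i=1}^{c+1}\phi(u_i) \;\ge\; \sum_{k=1}^{c+1}(N+k).
\]
Using the trivial bound $B \ge \binom{b+1}{2}$ and substituting $N = 2c+m$, $b = m-2$, this reduces to $2(m-1)(c+1) \ge (c+1)(c+2)$, giving $c \le 2m - 4$, much stronger than what we need.

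The binding case is Case 2, in which exactly one $P_3$-center sum equals $I$ and the remaining $c$ degree-$2$ sums are distinct and at least $N+1$. Now
\[
\tfrac{N(N+1)}{2} - B \;=\; \sum_{i=1}^{c+1}\phi(u_i) \;\ge\; I + \sum_{k=1}^{c}(N+k),
\]
which, after rearranging, gives $B + I \le \tfrac{N(N+1)}{2} - cN - \binom{c+1}{2}$. The key step is to note that $B + I$ is a sum of $b+1$ distinct positive integer labels, whence $B + I \ge \binom{b+2}{2}$. Substituting $N = 2c+m$ and $b = m-2$, this reduces to $c^2 - (2m+1)c - 2m \le 0$; evaluating the left side at $c = 2m+2$ gives $2 > 0$, so $c \le 2m + 1$.

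The main subtlety is recognizing in Case 2 that $I$ must be grouped with the $b$-side pendant labels when bounding below by the sum of the smallest possible labels: bounding $B$ and $I$ separately only yields $c \le 2m + 2$, whereas the tighter bound $B + I \ge \binom{b+2}{2}$ provides exactly the extra room needed to force $c \le 2m+1$.
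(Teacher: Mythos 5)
Your proof is correct and takes essentially the same approach as the paper's: the identical case split on whether some $P_3$-center sum equals the internal-edge label (with at most one such center), the same lower bound $\sum_k(N+k)$ for the remaining degree-$2$ sums, and an upper bound coming from the available label pool. The only cosmetic difference is that you bound the complementary label set $B+I$ from below by $\binom{b+2}{2}$ rather than bounding the centers' labels from above by the largest $2c$ labels (which amounts to $B+I\ge\binom{m+1}{2}$); your inequality $c^2-(2m+1)c-2m\le 0$ is marginally weaker than the paper's but still yields $c\le 2m+1$.
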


\begin{proof} 
Let $f$ be an antimagic labeling for $S_{1,b}+cP_3$. 
We consider two possibilities for $\phi(u_i)$'s.
Suppose that $\phi(u_i)\le m+2c$ for some $i$. Note that there is an edge $e$ whose label $f(e)$ is equal to $\phi(u_i)$.   
We must have $e=u_{c+1}u_{c+2}$, the only internal edge; otherwise, the pendent vertex incident to $e$ and the vertex $u_i$ have the same vertex sum, a contradiction. 
Moreover, there is at most one $i$ with $\phi(u_i)\le m+2c$, and $1\le i\le c$ since $\phi(u_i)>f(u_{c+1}u_{c+2})$ for $i=c+1,c+2$. 
The other possibility is that $\phi(u_i)> m+2c$ for all $1\le i\le c+2$.

Suppose that $\phi(u_i)\le m+2c$ for some $i$. 
May assume that $i=1$. Then 
\begin{equation}\label{upb2_0}
    \sum_{i=2}^{c+1}\phi(u_i) \ge\sum_{i=1}^{c}(m+2c+i)=\tfrac{1}{2}c(2m+5c+1).
\end{equation}
Note that $\deg(u_{c+1})=2$. 
An upper bound for the left-hand side of Inequality (\ref{upb2_0}) is the sum of the largest $2c$ labels. Hence \[
\sum_{i=2}^{c+1}\phi(u_i)
\le\sum_{j=1}^{2c} (m+2c+1-j)
= c(2m+2c+1). 
\]
Observe that 
\begin{equation}\label{upb2_1}
c(2m+2c+1)\ge \tfrac{1}{2}c(2m+5c+1)
\Longrightarrow 
c\le 2m+1.
\end{equation}

Next suppose that $\phi(u_i)>m+2c$ for all $i$. 
Then 
\begin{equation}\label{upb2_3}
 \sum_{i=1}^{c+1}\phi(u_i) \ge\sum_{i=1}^{c+1}(m+2c+i)=\tfrac{1}{2}(c+1)(2m+5c+2),    
\end{equation}
and the left side of (\ref{upb2_3}) is at most 
\[
\sum_{j=1}^{2c+2} (m+2c+1-j)
=(c+1) (2m+2c-1). 
\]
Observe that 
\begin{equation}\label{upb2_2}
(c+1)(2m+2c-1) \ge  \tfrac{1}{2}(c+1)(2m+5c+2)
\Longrightarrow 
c\le 2m-4. 
\end{equation}
Thus we have $c\le 2m+1$ if $f$ is an antimagic labeling. 
\end{proof}

\begin{lemma}\label{upb3}%%lemaa 2.5 in Yang's thesis.
For $2\le a\le 6$,  if $S_{a, b}+cP_3$ is antimagic,  then 
\[c\le3a+2b+1=2m+a-1.\]
Moreover, if $b\le 21$ and $S_{7, b}+cP_3$ are antimagic, then $c\le 2b+21=2m+5$. %2a+2b+7
\end{lemma}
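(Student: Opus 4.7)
The plan is to generalize the argument of Lemma~\ref{upb2} by passing to a partial sum of the internal vertex sums whose edge count picks up the degree $a+1$ of $u_{c+1}$. Let $f$ be an antimagic labeling of $S_{a, b}+cP_3$, set $M=m+2c$ for the largest label, and write $F=f(u_{c+1}u_{c+2})$. Since every label except $F$ appears as the vertex sum of some pendant vertex, any internal vertex $u_j$ with $\phi(u_j)\le M$ must satisfy $\phi(u_j)=F$; the strict inequalities $\phi(u_{c+1}),\phi(u_{c+2})>F$ then force $j\le c$, and at most one such index exists. This is the same starting observation used in Lemma~\ref{upb2}.

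\textbf{Case 1:} some $\phi(u_j)\le M$; after relabeling assume $j=1$. Then $\phi(u_2),\ldots,\phi(u_{c+2})$ are pairwise distinct integers exceeding $M$. The key move is to consider
\[
\Sigma \ :=\ \sum_{i=2}^{c+1}\phi(u_i),
\]
deliberately excluding $u_{c+2}$ so that its large degree $b+1$ does not enter the upper bound. No two of $u_2,\ldots,u_{c+1}$ share an edge, so $\Sigma$ is the sum of exactly $2(c-1)+(a+1)=2c+a-1$ distinct labels, giving
\[
\tfrac{1}{2}c(2m+5c+1) \ \le\ \Sigma \ \le\ \tfrac{1}{2}(2c+a-1)(2m+2c-a+2).
\]
Simplifying the combined inequality reduces it to
\[
-c^{2}+(2m+1)c+2m(a-1)-a^{2}+3a-2 \ \ge\ 0.
\]
Substituting $c=2m+a$ collapses the left-hand side to exactly $-2(a-1)^{2}<0$, a contradiction; hence $c\le 2m+a-1=3a+2b+1$. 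For $a=7$ the substitution $c=2m+6$ yields $2m-60$, which is negative precisely when $m\le 29$, i.e., $b\le 21$; this tightens the bound to $c\le 2m+5$ in that subrange.

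\textbf{Case 2:} every $\phi(u_i)>M$. The same argument applied to $\sum_{i=1}^{c+1}\phi(u_i)$, which involves $c+1$ distinct sums exceeding $M$ and $2c+a+1$ distinct labels, yields a strictly more restrictive inequality; evaluating it at $c=2m+a-1$ already produces a contradiction of magnitude $8m+2a^{2}+4a-2>0$, so Case 2 never dictates the final upper bound.

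The delicate point is the choice of summation range in Case 1: including $u_{c+2}$ would introduce $b$ and wipe out the $a$-dependence, while excluding $u_{c+1}$ would forfeit the crucial $+1$ in its degree that makes the substitution $c=2m+a$ collapse to exactly $-2(a-1)^{2}$. The threshold $m\le 29$ for the $a=7$ refinement is the only additional boundary check needed for the second part of the lemma.
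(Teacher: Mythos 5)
Your proof is correct and takes essentially the same route as the paper's: the same dichotomy on whether some internal vertex sum is at most $m+2c$, the same lower bound $\tfrac{1}{2}c(2m+5c+1)$, and the same upper bound via the sum of the largest $2c+a-1$ (resp.\ $2c+a+1$) labels, exploiting that $\deg(u_{c+1})=a+1$. The only divergence is in finishing the quadratic inequality --- the paper solves for $c$ and bounds the discriminant by $(2m+2a-1)^2$, while you evaluate the concave quadratic at $c=2m+a$ (and at $2m+6$ for $a=7$) and find it negative; to make that step airtight, add the one-line remark that this quadratic is decreasing for $c\ge m+\tfrac{1}{2}$, so its negativity at $2m+a$ rules out every $c\ge 2m+a$ rather than only $c=2m+a$ itself.
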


\begin{proof}
As the proof of Lemma~\ref{upb2}, we consider an arbitrary antimagic labeling $f$ for $S_{a,b}+cP_3$ and discuss the two possibilities for $\phi(u_i)$'s.

Suppose $\phi(u_1)\le m+2c$. The lower bound for $\sum_{i=2}^{c+1}\phi(u_i)$ in (\ref{upb2_0}) remains valid.
Now that $u_{c+1}$ is incident to $a+1$ edges, we can bound $\sum_{i=2}^{c+1}\phi(u_i)$ above by the sum of the largest $a+2c-1$ labels. 
For convenience, we write the sum of these labels as follows:  
\[
\sum_{j=1}^{2c}(m+2c+1-j)+\sum_{j=1}^{a-1}(m+1-j)
=c(2m+2c+1)+\tfrac{1}{2}(a-1)(2m-a+2).
\]
Observe that 
\begin{eqnarray*}
&&c(2m+2c+1)+\tfrac{1}{2}(a-1)(2m-a+2)\ge \tfrac{1}{2}c(2m+5c+1)\\
&\Longleftrightarrow&
-[c-\tfrac{1}{2}(2m+1)]^2+\tfrac{1}{4}[(2m+1)^2+4(a-1)(2m-a+2)]\ge 0,  
\end{eqnarray*}
and the last inequality implies 
\begin{equation}\label{upb3_1}
c\le \tfrac{1}{2}(2m+1+\sqrt{(2m+1)^2+4(a-1)(2m-a+2)}).
\end{equation}

Assume that $\phi(u_i)>m+2c$ for all $i$.
Then Inequality (\ref{upb2_3}) is again a lower bound for $\sum_{i=1}^{c+1}\phi(u_i)$.
On the other hand, an upper bound for $\sum_{i=1}^{c+1}\phi(u_i)$ is the sum of the largest $a+2c+1$ labels, which is equal to  
\begin{eqnarray*}
\sum_{j=1}^{2c+a+1} (m+2c+1-j)
&=&\sum_{j=1}^{2c+2} (m+2c+1-j)+\sum_{j=1}^{a-1} (m-1-j)\\
&=& (c+1) (2m+2c-1)+\tfrac{1}{2}(a-1)(2m-a-2).  
\end{eqnarray*}
The two bounds lead to the following inequality which can be rearranged further:
\begin{eqnarray*}
&&(c+1) (2m+2c-1)+\tfrac{1}{2} (a-1)(2m-a-2)\ge 
\tfrac{1}{2}(c+1)(2m+5c+2)\\ 
&\Longleftrightarrow&
-[c-\tfrac{1}{2}(2m-5)]^2+\tfrac{1}{4}[(2m-3)^2+4(a-1)(2m-a-2)]\ge 0.  
\end{eqnarray*}
This implies 
\begin{equation}\label{upb3_2}
    c\le\tfrac{1}{2}(2m-5+\sqrt{(2m-3)^2+4(a-1)(2m-a-2)})
\end{equation}
Comparing (\ref{upb3_1}) and (\ref{upb3_2}),  we have  $c\le \tfrac{1}{2}(2m+1+\sqrt{(2m+1)^2+4(a-1)(2m-a+2)})$.
Moreover, Inequality (\ref{upb3_1}) can be further improved as 
\begin{equation}\label{upb3_3}
c< \tfrac{1}{2}(2m+1+\sqrt{(2m+2a-1)^2})=2m+a.    
\end{equation} 
because
\begin{eqnarray*}
    (2m+1)^2+4(a-1)(2m-a+2)
    &=&(2m+1)^2+4(2m+2)(a-1)-4(a-1)^2\\
    &<&(2m+1)^2+4(2m+2)(a-1)+4(a-1)^2\\
    &=&(2m+2a-1)^2.
\end{eqnarray*}
The upper bound for $c$ in (\ref{upb3_3}) is better than that in (\ref{upb0}) for $2\le a\le 6$.
We plug $a=7$ into (\ref{upb3_1}) and compare it with (\ref{upb0}). Then 
\[
\tfrac{1}{2}(2m+1+\sqrt{(2m+1)^2+4(a-1)(2m-a+2)})<2m+6
\]
holds for $m\le 29$, or equivalently for $b\le 21$.
\end{proof}

From the proofs of the previous lemmas, we see that if 
the label of the internal edge of $S_{a,b}$ is equal to the vertex sum of the internal vertex of some $P_3$, then we have a larger tolerance of the number of $P_3$'s.
Thus, our strategy for constructing antimagic labeling is to arrange the vertex sums of $u_i$'s, $1\le i\le c$, as small as possible but only one of them is smaller than the largest label, and the label equal to the smallest $\phi(u_i)$, $1\le i\le c$, will be assigned to the internal edge of $S_{a,b}$. Meanwhile, the vertex sums of $u_{c+1}$ and $u_{c+2}$ shall be arranged relatively large and distinct to other vertex sums. With this principle, we show the next proposition that is crucial for our constructions.

\begin{proposition}\label{labeling}
Let $a$, $b$ and $c$ be positive integers with $c\le \tau_0$. If $f$ is a bijection between $E(S_{a,b}+cP_3)$ and $[1,m+2c]$ such that 
\[\{\phi(u_1),\phi(u_2),\ldots,\phi(u_{c+1})\}=f(E_I)\cup[m+2c+1,m+3c],\] then $f$ is an antimagic labeling. 
\end{proposition}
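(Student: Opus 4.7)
The strategy is to reduce the whole problem to proving one numerical inequality: that the single remaining vertex sum $\phi(u_{c+2})$ exceeds $m+3c$. The rest of the argument is essentially bookkeeping enabled by the hypothesis.

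\textbf{Reduction.} Every edge of $S_{a,b}+cP_3$ other than the internal edge $u_{c+1}u_{c+2}$ has a pendent endpoint, so the multiset of pendent vertex sums is exactly $[1,m+2c]\setminus f(E_I)$, a set of $m+2c-1$ distinct values. By hypothesis, $\{\phi(u_1),\ldots,\phi(u_{c+1})\}=f(E_I)\cup[m+2c+1,m+3c]$, a set of $c+1$ distinct values disjoint from the pendent sums. Therefore $f$ is antimagic if and only if $\phi(u_{c+2})\notin[1,m+3c]$, which amounts to $\phi(u_{c+2})>m+3c$ since vertex sums are positive.

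\textbf{Computing $\phi(u_{c+2})$.} By the handshake identity, the total of all vertex sums equals $2\sum_e f(e)=(m+2c)(m+2c+1)$. Subtracting the pendent-sum total $\binom{m+2c+1}{2}-f(E_I)$ and the total $\sum_{i=1}^{c+1}\phi(u_i)=f(E_I)+\sum_{j=m+2c+1}^{m+3c}j$, the contributions of $f(E_I)$ cancel and one obtains a closed form for $\phi(u_{c+2})$ depending only on $m$ and $c$, namely $\phi(u_{c+2})=\binom{m+2c+1}{2}-\tfrac{1}{2}c(2m+5c+1)$.

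\textbf{Matching $\tau_0$ with the quadratic bound.} Routine simplification turns $\phi(u_{c+2})>m+3c$ into $c^2-(2m-5)c-m(m-1)<0$, i.e., $c<\alpha$ where $\alpha=\tfrac{1}{2}\bigl(2m-5+\sqrt{8m^2-24m+25}\bigr)$. The main obstacle, and where the definition of $\tau_0$ is used critically, is that $\alpha$ is only slightly larger than $\tau_0$; one cannot afford any slack. I would prove $(1+\sqrt{2})(m-\tfrac{3}{2})-1<\alpha$ by noting that $\bigl(2\sqrt{2}(m-\tfrac{3}{2})\bigr)^2=8m^2-24m+18$, so the desired inequality reduces to $\sqrt{8m^2-24m+25}>\sqrt{8m^2-24m+18}$, which is obvious. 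Combined with $\tau_0\le(1+\sqrt{2})(m-\tfrac{3}{2})-1$ from the floor in the definition of $\tau_0$, this yields $c\le\tau_0<\alpha$, finishing the proof.
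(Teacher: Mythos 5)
Your proof is correct, and its overall structure mirrors the paper's: reduce antimagicness to the single inequality $\phi(u_{c+2})>m+3c$, compute $\phi(u_{c+2})=\binom{m+2c+1}{2}-\tfrac{1}{2}c(2m+5c+1)$ in closed form (the paper gets the same expression via $\phi(u_{c+2})=\sum_{j=1}^{m+2c}j+f(E_I)-\sum_{i=1}^{c+1}\phi(u_i)$), and then compare the resulting quadratic condition on $c$ with $\tau_0$. Where you genuinely diverge is the last step. The paper phrases the target as $\phi(u_{c+2})\ge m+3c+1$, obtains the threshold $\tfrac{1}{2}(2m-5+\sqrt{8m^2-24m+17})$, and then asserts that its floor equals $\tau_0$ for every $m\ge 2$, "verified with computers." You instead work with the strict form of the inequality (equivalent for integers), get the root $\alpha=\tfrac{1}{2}(2m-5+\sqrt{8m^2-24m+25})$, and close the gap analytically: since $\bigl(2\sqrt{2}(m-\tfrac32)\bigr)^2=8m^2-24m+18<8m^2-24m+25$, you get $\tau_0\le(1+\sqrt{2})(m-\tfrac32)-1<\alpha$, so $c\le\tau_0$ suffices. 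This one-sided comparison is all that is needed for the proposition, and it replaces the paper's computer verification of an exact floor identity with a two-line algebraic argument — a genuine (if local) improvement in rigor and self-containedness. The only thing you give up is the sharpness information the paper's equality provides, which is irrelevant here.
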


\begin{proof}
Since the vertex sum of each pendent vertex is at most $m+2c$ and not equal to $f(u_{c+1}u_{c+2})$, it suffices to show $\phi(u_{c+2})\ge m+3c+1$. Observe that  
\begin{eqnarray*}
\phi({u_{c+2}})&=&\sum_{j=1}^{m+2c}j+f(E_I)-\sum_{i=1}^{c+1}\phi(u_{i})\\
&=&\tfrac{1}{2}(m+2c)(m+2c+1)-\tfrac{1}{2}c(2m+5c+1)\\
&=&\tfrac{1}{2}(m^2+2mc-c^2+m+c)\\
&\ge& m+3c+1
\end{eqnarray*}
holds when $c\le \lfloor\frac{1}{2}(2m-5+\sqrt{8m^2-24m+17})\rfloor$.
Meanwhile, 
\[\lfloor\tfrac{1}{2}(2m-5+\sqrt{8m^2-24m+17})\rfloor=\tau_0\] 
for every integer $m\ge 2$ can be verified with computers. So, the condition on $c$ guarantees that $\phi(u_{c+2})\ge m+3c+1$, and hence $f$ is an antimagic labeling. 
\end{proof}

\section{The antimagic labelings}

In the previous section, we determined the necessary condition on $c$ for which $S_{a,b}+cP_3$ is antimagic, which provides an upper bound for $\tau(S_{a,b})$. 
Next, we will construct the antimagic labeling for $S_{a,b}+cP_3$ with every $c$ not greater than these bounds. 
We will construct the labelings in terms of $c$.
More precisely, given $c$, we construct the antimagic labeling for $S_{a,b}+cP_3$ provided that $a$, $b$ and $c$ satisfy the conditions in Section 2. In the sequel, let  $k=|E(S_{a,b}+cP_3)|=a+b+2c+1=m+2c$.
For an edge labeling $f$, let $f(E_i)=\{f(e)\mid e\in E_i\}$. 

\begin{description}

\item{$\bullet$} $1\le c\le 2$

Define 
$f_c$ as follows:  
\[
f_1(E_i)=\left\{
\begin{array}{cc}
 \{1, k-1\},   & i=1  \\
 \{k\},     & i=I  \\
 {[2,a+1]},  &  i=A\\
 {[a+2, k]},    & i=B
\end{array}
\right.
\mbox{ and }
f_2(E_i)=\left\{
\begin{array}{cc}
 \{1, k-1\},     & i=1  \\
 \{3, k-2\},     & i=2  \\
 \{k\},     & i=I  \\
 \{2\}\cup [4,a+2],     & i=A  \\
 {[a+3,k-3]},     & i=B
\end{array}
\right.
\]
It is easy to verify that $f_1$ and $f_2$ are antimagic labelings for $S_{a,b}+P_3$ and $S_{a,b}+2P_3$, respectively.  

\item{$\bullet$} $3\le c\le 5$ (for $m\ge 4$)

For $c=3,4$, we first give the partial labelings for  $f_3$ and  $f_4$, then determine $f_c(E_A)$ and $f_c(E_B)$ according to $a$: 
\[
f_3(E_i)=\left\{
\begin{array}{cc}
 \{1, k-1\},     & i=1  \\
 \{3, k-2\},     & i=2  \\
 \{5, k-3\},     & i=3  \\
 \{k\},     & i=I  \\
\end{array}
\right.\mbox{and }
f_4(E_i)=\left\{
\begin{array}{cc}
 \{1, k-1\},     & i=1  \\
 \{3, k-2\},     & i=2  \\
 \{5, k-3\},     & i=3  \\
 \{7, k-4\},     & i=4  \\
 \{k\},     & i=I  \\
\end{array}
\right..
\]
For $a=1$, let $f_c(E_A)=\{4\}$ and $f_c(E_B)$ be the set of the remaining labels. If $a\ge 2$, let $f_c(E_A)$ and $f_c(E_B)$ be the sets of the smaller $a$ labels and the larger $b$ labels among the unused ones, respectively. Thus, $\phi(u_{c+1})\ge k+4$. 
Let us verify $\phi(u_{c+2})>\phi(u_{c+1})$.
This is obvious for $a\ge 2$.
For $a=1$, since we study $S_{a,b}+cP_3$ for $c\ge 3$ only when $b\ge 2$, we have $\{2,6\}\subseteq f_c(E_B)$ and the inequality holds.

If we followed the above patterns to label $S_{a,b}+5P_3$, then labels of the $P_3$'s would be $1$ and $k-1$, $3$ and $k-2$, $5$ and $k-3$, $7$ and $k-4$, $9$ and $k-5$. However, this does not work for $S_{1,2}$ because we have $9=k-5$ for $S_{1,2}+5P_3$. Instead, we define 
\[
f_5(E_i)=\left\{
\begin{array}{cc}
 \{1, 2\},     & i=1  \\
 \{4, k-3\},     & i=2  \\
 \{7, k-4\},     & i=3  \\
 \{6, k-2\},     & i=4  \\
 \{5, k\},     & i=5  \\
 \{3\},     & i=I  \\
\end{array}
\right.,
\]
and for $f_5(E_A)$ and $f_5(E_B)$, first assign $k-1$ to  $f_5(E_A)$, and then take turns to distribute labels $8,9,\ldots,k-5$ to $f_5(E_A)$ and $f_5(E_B)$ alternatively until $f_5(E_A)$ contains $a$ labels. 
Finally, put all the remaining labels in $f_5(E_B)$.
If $a=1$, then we have $\phi(u_1)=3$, $\phi(u_2)=k+1$, $\phi(u_3)=k+3$, $\phi(u_4)=k+4$, $\phi(u_5)=k+5$, $\phi(u_6)=k+2$, and $\phi(u_7)\ge k+6$. So $f_5$ is an antimagic labeling. 
When $a\ge 2$, if the above labeling is not antimagic, then 
\[3+(k-1)+8\le \phi(u_6)= \phi(u_7)=3+9+\cdots+(k-5).\] 
In this case, we exchange $8$ and $9$.
The new labeling gives $\phi(u_7)+2=\phi(u_6)\ge k+11$, which is an antimagic labeling.

\end{description}

Before continuing the work for $c\ge 6$, we introduce some properties.

\begin{lemma}\label{2subset}
Let $k\in\mathbb{N}$ and $p=p(k):= \lfloor \frac{2k+1}{5}\rfloor$. Then we can pick $2p$ distinct numbers $\alpha_1,\alpha_2,\ldots,\alpha_p$ and $\beta_1,\beta_2,\ldots,\beta_p$  from $[1,k]$
such that $\alpha_i+\beta_i=k+i$ for $1\le i\le p$.
\end{lemma}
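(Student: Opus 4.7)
The plan is to construct the pairs explicitly. I would let the $\beta_i$'s occupy the top block $L = \{k-p+1, k-p+2, \ldots, k\}$ and the $\alpha_i$'s occupy a nearly consecutive block $S \subseteq [1, k-p]$ disjoint from $L$. Summing $\alpha_i + \beta_i = k+i$ over $i$ and subtracting $\sum L = pk - p(p-1)/2$ forces $\sum S = p^2$, which pins $S$ down to one of two explicit choices depending on the parity of $p$.

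For $p$ odd I would take $S = \{(p+1)/2, (p+3)/2, \ldots, (3p-1)/2\}$ (a consecutive $p$-block of sum $p^2$) together with the cyclic-shift assignment $\alpha_i = 2i - 1 + (p-1)/2$ for $1 \le i \le (p+1)/2$ and $\alpha_i = 2i - (p+3)/2$ for $(p+3)/2 \le i \le p$. For $p$ even no consecutive $p$-block has sum $p^2$, so I would instead take $S = \{p/2, p/2+1, \ldots, 3p/2\} \setminus \{p\}$ together with the interleaved assignment $\alpha_{2j-1} = p/2 + j - 1$ and $\alpha_{2j} = p + j$ for $1 \le j \le p/2$. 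In either case I set $\beta_i := k + i - \alpha_i$, which makes the sum condition automatic, and a direct check shows $\{\alpha_i\} = S$ and $\{\beta_i\} = L$.

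The remaining verification is $S \cap L = \emptyset$. Since $\max S \le 3p/2$ and $\min L = k - p + 1$, the required inequality $\max S < k - p + 1$ simplifies to $5p < 2k + 2$. The hypothesis $p \le \lfloor(2k+1)/5\rfloor$ gives $5p \le 2k + 1$; when $p$ is even this strengthens to $5p \le 2k$ by parity (since $5p$ is even while $2k+1$ is odd), and when $p$ is odd the bound $5p \le 2k + 1 < 2k + 2$ is already enough (in the extremal case $\max S = (3p-1)/2$ is strictly less than $\min L = (3p+1)/2$). Once disjointness is established, distinctness of all $2p$ numbers follows immediately from the set-equalities recorded above.

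The main obstacle will be the even case: the arithmetic constraint $\sum S = p^2$ cannot be realized by any consecutive block of length $p$, so one must break consecutiveness by deleting the middle element $p$, and then carefully interleave odd- and even-indexed $\alpha_i$'s (odd indices filling $\{p/2, \ldots, p-1\}$, even indices filling $\{p+1, \ldots, 3p/2\}$) so that the $p$ target sums $k+1, \ldots, k+p$ are each realized exactly once and the corresponding $\beta_i$'s exhaust $L$ without collision.
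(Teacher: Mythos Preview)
Your construction is correct and in fact coincides with the paper's at the level of sets: in both proofs $\{\beta_i\}=[k-p+1,k]$ and $\{\alpha_i\}=\{(p+1)/2,\ldots,(3p-1)/2\}$ for odd $p$, respectively $\{p/2,\ldots,3p/2\}\setminus\{p\}$ for even $p$, with only the index-to-element bijection differing (the paper encodes both parities in a single formula via $\tfrac{1+(-1)^p}{2}$, while you split cases and supply the motivating sum constraint $\sum S=p^2$). The disjointness check and the resulting minimum $\min_i\alpha_i=\lfloor(p+1)/2\rfloor$ match as well, so your argument plugs into the rest of the paper without change.
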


\begin{proof} 
 The $\alpha_i$ and $\beta_i$ are given as follows, 
\begin{center}
$\alpha_i=\left\{
\begin{array}{rl}
p-i ,    &1\le i\le \lfloor \frac{p}{2}\rfloor  \\
 2p+\frac{1+(-1)^p}{2}-i,    &\lfloor \frac{p}{2}\rfloor< i\le p 
\end{array}
\right.$
 and 
$\beta_i=\left\{
\begin{array}{rl}
k-p+2i ,    &1\le i\le \lfloor \frac{p}{2}\rfloor  \\
 k-2p-\frac{1+(-1)^p}{2}+2i,    &\lfloor \frac{p}{2}\rfloor< i\le p 
\end{array}.
\right.$
\end{center}
It is straightforward to verify that  
all $\alpha_i$'s and $\beta_i$'s are distinct by the condition $p=\lfloor \frac{2k+1}{5}\rfloor$, and $\alpha_i+\beta_i=k+i$ for $1\le i\le p$. 
Observe that the minimum of all $\alpha_i$'s and $\beta_i$'s is $\alpha_{\lfloor \frac{p}{2}\rfloor}=\lfloor\frac{p+1}{2}\rfloor$.
Also, $\{\beta_i\mid 1\le i\le p\}=[k-p+1,k]$. 
\end{proof}
Indeed, Lemma~\ref{2subset} is the best possible for finding pairs whose sums are distinct and larger than $k$. 
Recall that $k=m+2c$. When $c\le 2m+1$,
\begin{equation}\label{le2m+1}
p(k)=\lfloor\tfrac{2k+1}{5}\rfloor=\lfloor\tfrac{2m+4c+1}{5}\rfloor\ge c,  
\end{equation}
and when $2m+2\le c\le 2m+6$,  
\begin{equation}\label{2m+2to6}
p(k)=\lfloor\tfrac{2k+1}{5}\rfloor=\lfloor\tfrac{2m+4c+1}{5}\rfloor= c-1.   
\end{equation}

Generally speaking, we will use $\alpha_i$ and $\beta_i$ to label the edges of each $P_3$ for $i=1,2,\ldots$, accordingly.  
However, for convenience or due to deficiency of the number of pairs (when $2m+2\le c\le 2m+6$), we will find another pair to label one $P_3$. 

The following are some notations which will be used in the subsequent proofs.
The sum of all elements in a set $X$ is denoted by $\|X\|$. 
A set $X$ is called an {\em $(s,t)$-set} if $|X|=s$ and $\|X\|=t$. 
For $|X|\ge n$, $X_{\min}^{(n)}$ (resp., $X_{\max}^{(n)}$) refers to the subset of $X$ consisting of the smallest (resp., largest) $n$ elements in $X$. 
In particular, $\|X_{\min}^{(1)}\|$ and $\|X_{\max}^{(1)}\|$ are the minimum and maximum elements in $X$.

\begin{lemma}{\rm \cite{L19}}\label{stset}
If $Z$ is a set of consecutive integers such that $\|Z_{\min}^{(s)}\|\le t\le \|Z_{\max}^{(s)}\|$, then there exists an $(s,t)$-set $Y\subseteq Z$.
\end{lemma}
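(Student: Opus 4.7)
The plan is to construct $Y$ by an incremental swap argument, viewing the statement as a discrete intermediate value result. Begin with $Y_0=Z_{\min}^{(s)}$, an $(s,t_0)$-set where $t_0=\|Z_{\min}^{(s)}\|\le t$. I will show that from any $(s,t')$-set $Y\subseteq Z$ with $t'<\|Z_{\max}^{(s)}\|$ one can produce an $(s,t'+1)$-set $Y'\subseteq Z$. Iterating this operation exactly $t-t_0$ times then yields the required $(s,t)$-set, since $t\le \|Z_{\max}^{(s)}\|$ by hypothesis.

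For the incremental step, the key claim is: if $Y\ne Z_{\max}^{(s)}$, there exists $x\in Y$ with $x+1\in Z\setminus Y$. Suppose, for contradiction, no such $x$ exists; then whenever $x\in Y$ and $x+1\in Z$, we already have $x+1\in Y$. Because $Z$ is a set of consecutive integers, a short induction starting from $\|Y_{\min}^{(1)}\|$ forces $Y$ to be a final segment of $Z$, and comparing cardinalities gives $Y=Z_{\max}^{(s)}$, contradicting our assumption. Having located such an $x$, set $Y'=(Y\setminus\{x\})\cup\{x+1\}$; then $Y'\subseteq Z$, $|Y'|=s$, and $\|Y'\|=t'+1$.

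There is no real obstacle here: the argument amounts to walking along the cover relation of $s$-subsets of $Z$ ordered by sum, where adjacent levels are connected by unit swaps. The essential use of the hypothesis is that $Z$ is a set of consecutive integers, which guarantees $x+1\in Z$ whenever $x\in Y$ with $x<\max Z$; without consecutiveness the single-step increment could fail, and only a proper subset of the sums in the range would be attainable.
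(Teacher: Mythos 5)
Your proof is correct and follows essentially the same route as the paper's: start from $Z_{\min}^{(s)}$ and repeatedly perform a unit swap $x\mapsto x+1$, with the process only blocked when $Y=Z_{\max}^{(s)}$. You merely spell out the termination argument (that the absence of a swappable $x$ forces $Y$ to be the top $s$ elements) in more detail than the paper does.
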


\begin{proof}
We prove by induction. If $t=\|Z_{\min}^{(s)}\|$, then $Z_{\min}^{(s)}$ is the $(s,t)$-set. 
Suppose that $\|Z_{\min}^{(s)}\|\le t\le \|Z_{\max}^{(s)}\|$ and $Y=\{z_1,z_2,\ldots,z_s\}\subseteq Z$ is an $(s,t)$-set. If we can replace some $z_i$ by $z_i+1\in Z-Y$, then $Y-\{z_i\}\cup\{z_i+1\}\subseteq Z$ is an $(s,t+1)$-set. Once we cannot do the replacement, it means that $Y$ already contains the largest $s$ integers in $Z$ and $t=\|Z_{\max}^{(s)}\|$. 
\end{proof}

\begin{description}

\item{$\bullet$} $6\le c\le 2m+1$ (for $m\ge 5$)

 By Lemma~\ref{2subset} and Inequality~(\ref{le2m+1}), we can take $c$ pairs of $\alpha_i,\beta_i\in [k]$ with $\alpha_i+\beta_i=k+i$ for $1\le i \le c$.  
Let $\alpha_{i_0}=\min_{1\le i\le c}\alpha_i$. 
The conditions on $c$ and $m$ imply $k\ge 17$. So  $p(k)=\lfloor \frac{2k+1}{5}\rfloor\ge 7$ and  $\alpha_{i_0}\ge \alpha_{\lfloor\frac{p}{2}\rfloor}\ge 4$. 

We consider two cases $a=1$ and $a\ge 2$. For $a=1$, 
define 
\[
f_c(E_i)=
\left\{
\begin{array}{cc}
\{\alpha_i,\beta_i\},     &  1\le i\le c,i\neq i_0 \\
\{1,\alpha_{i_0}-1\},     & i=i_0 \\
\{\alpha_{i_0}\}, & i=I \\
\{\beta_{i_0}\}, & i=A
\end{array}
\right.
\]
and let $f_c(E_B)$ consist of the remaining labels. 
The labeling $f_c$ satisfies Proposition~\ref{labeling}, so $S_{1,b}+cP_3$ is antimagic.

Suppose $a\ge 2$. If $p\ge c+1$, then 
let $X:=[4,k]-\cup_{i=1}^{c+1}\{\alpha_i,\beta_i\}$ 
and define 
\[
f_c(E_i)=
\left\{
\begin{array}{cc}
\{\alpha_i,\beta_i\},     &  1\le i\le c-1 \\
\{1,2\},     & i=c \\
\{3\}, & i=I \\
\{\alpha_c,\beta_c\} \cup X_{\min}^{(a-2)}, & i=A\\
\{\alpha_{c+1},\beta_{c+1}\}\cup X_{\max}^{(b-2)}, &i=B
\end{array}
\right..
\]
Then $f_c$ is an antimagic labeling by direct comparisons. 

Suppose $p=c$. Let  $X:=[4,k]-\cup_{i=1}^c\{\alpha_i,\beta_i\}$. Note that $ X_{\min}^{(a-1)}\cap X_{\max}^{(1)}=\varnothing$ since $|X|=a+b-2\ge a$. Also, at least one of the following inequalities holds. 
\begin{description}
\item{(1)} $3+\|X_{\max}^{(1)}\|+\|X_{\min}^{(a-2)}\| \le k$.

\item{(2)} $k< 3+\|X_{\max}^{(1)}\|+\|X_{\min}^{(a-2)}\|$ and $ 3+\lambda+\|X_{\min}^{(a-1)}\|\le k+c$. 

\item{(3)} $k+c<  3+\|X_{\max}^{(1)}\|+\|X_{\min}^{(a-1)}\|$. 

\end{description}

If (1) is true, then $3+\|X_{\max}^{(1)}\|+\|X_{\min}^{(a-2)}\| \in \{\alpha_{i_1},\beta_{i_1}\}$ for some $1\le i_1\le c$  since $\|X_{\max}^{(1)}\|<  3+\|X_{\max}^{(1)}\|+\|X_{\min}^{(a-2)}\| \le k$. 
Define 
\[
f_c(E_i)=
\left\{
\begin{array}{cc}
\{\alpha_i,\beta_i\},     &  1\le i\le c,i\neq i_1 \\
\{1,2\},     & i=i_1 \\
\{3\}, & i=I \\
\{k+i_1-3-\|X_{\max}^{(1)}\|-\|X_{\min}^{(a-2)}\|\}\cup X_{\max}^{(1)}\cup X_{\min}^{(a-2)}, & i=A
\end{array}
\right.
\]
and let $f_c(E_B)$ be the set of the remaining labels. 
Thus, $\{\phi(u_1),\phi(u_2),\ldots,\phi(u_{c+1})\}=\{3\}\cup[k+1,k+c]$.
By Proposition~\ref{labeling}, $f_c$ is an antimagic labeling. 

Next suppose that (2) holds. Then $3+\|X_{\max}^{(1)}\|+\|X_{\min}^{(a-1)}\|=k+i_2$ for some $1\le i_2\le c$. Then we define 
\[
f_c(E_i)=
\left\{
\begin{array}{cc}
\{\alpha_i,\beta_i\}     &  1\le i\le c,i\neq i_2 \\
\{1,2\}     & i=i_2 \\
\{3\} & i=I \\
X_{\max}^{(1)}\cup X_{\min}^{(a-1)} & i=A
\end{array}
\right.
\]
and let $f_c(E_B)$ be the set of the remaining labels. 
Again, by Proposition~\ref{labeling}, $f_c$ is an antimagic labeling. 

Finally, for (3), we define 
\[
f_c(E_i)=
\left\{
\begin{array}{cc}
\{\alpha_i,\beta_i\}     &  1\le i\le c-1 \\
\{1,2\}     & i=c \\
\{3\} & i=I \\
X_{\max}^{(1)}\cup X_{\min}^{(a-1)} & i=A
\end{array}
\right.
\]
and let $f_c(E_B)$ be the set of the remaining labels. Since $\phi(u_{c+1})> k+c\ge\phi(u_i)$ for $1\le i\le c$, it suffices to show $\phi(u_{c+2})>\phi(u_{c+1})$. 
Note that $c=p=\lfloor\frac{2k+1}{5}\rfloor> \frac{2k-4}{5}=\frac{2m+4c-4}{5}$. 
So $c>2m-4$ and $\alpha_c+\beta_c=k+c>k+2m-4> 2k-2c\ge 2\|X_{\max}^{(1)}\|$.  
Since every element in $f_c(E_B)$ other than $\alpha_c$ is greater than any element in $X_{\min}^{(a-1)}$, we have 
\[\|f_c(E_B)\|>\|f_c(E_B)-\{\alpha_c,\beta_c\}\|+2\|X_{\max}^{(1)}\|>\|X_{\min}^{(a-1)}\|+\|X_{\max}^{(1)}\|=\|f_c(E_A)\|.
\]
As a consequence,  $\phi(u_{c+2})>\phi(u_{c+1})$ and $f_c$ is an antimagic labeling.

\end{description}

For $2m+2\le c\le 2m+6$, by Equation~(\ref{2m+2to6}), we have only $c-1$ pairs of $\alpha_i$ and $\beta_i$ to label the $P_3$'s. Let $X=[1,k]-\cup_{i=1}^{c-1}\{\alpha_i,\beta_i\}$. 
In each case below, we will find an $(a+1)$-set $W\subseteq X$ such that 
$\|W\|=k+c$ or $\phi(u_{c+2})>\phi(u_{c+1})=\|W\|>k+c$.
Then define the labeling as:
\[
f_c(E_i)=
\left\{
\begin{array}{cc}
\{\alpha_i,\beta_i\}     &  1\le i\le c-1 \\
\{1,\|W_{\min}^{(1)}\|-1\}     & i=c \\
W_{\min}^{(1)} & i=I \\
W-W_{\min}^{(1)} & i=A\\
X-W-\{1,\|W_{\min}^{(1)}\|-1\},&i=B
\end{array}
\right.
\]
So, by Proposition~\ref{labeling} or by direct comparisons, we can see that $f_c$ is an antimagic labeling. 

\begin{description}
\item{$\bullet$} $c=2m+2$ (for $a\ge 3$ and  $m\ge 16$)

We have $k+c=7m+6$ and $X=[1,m]\cup\{3m+2,3m+3\}$. Let $Z=[3,m]$. 

If $3\le a\le 4$, then
\[
\|Z_{\min}^{(a-1)}\|\le 12<m+1<2m-1\le\|Z_{\max}^{(a-1)}\|.
\]
By Lemma~\ref{stset}, there is an $(a-1,m+1)$-set $Y\subseteq Z$. 
Thus $W=Y\cup\{3m+2,3m+3\}$ is the desired  $(a+1,7m+6)$ set.

When $5\le a\le 9$, 
\[
\|Z_{\min}^{(a)}\|\le 63<4m+4<5m-10\le\|Z_{\max}^{(a)}\|.
\]
By Lemma~\ref{stset}, there is an $(a,4m+4)$-set $Y\subseteq Z$. Take $W=Y\cup\{3m+2\}$. 

For $a\ge 10$, since $a\le b$, we have $m=a+b+1\ge 21$ and 
\[
7m+6<10m-45\le\|Z_{\max}^{(a+1)}\|.
\]
If $\|Z_{\min}^{(a+1)}\|\le  7m+6$, then there is an $(a+1,7m+6)$-set $Y\subseteq Z$. 
Otherwise, $\|Z_{\min}^{(a+1)}\|>  7m+6$. 
Take $W=Y$ in the former case and $W=Z_{\min}^{(a+1)}$ in the latter case.

\item{$\bullet$} $c=2m+3$ (for $a\ge 4$ and $m\ge 19$)

Then $k+c=7m+9$ and $X=[1,m]\cup\{2m+2,3m+4\}$. Let $Z=[3,m]$. 

If $4\le a\le 7$, then
\[
\|Z_{\min}^{(a-1)}\|\le 33<2m+3<3m-3\le\|Z_{\max}^{(a-1)}\|.
\]
By Lemma~\ref{stset}, there is an $(a-1,2m+3)$-set $Y\subseteq Z$. 
Thus, $W=Y\cup\{2m+2,3m+4\}$ is the $(a+1,7m+9)$-set. 

When $8\le a\le 12$, 
\[
\|Z_{\min}^{(a)}\|\le 102\le 5m+7<8m-28\le\|Z_{\max}^{(a)}\|.
\]
By Lemma~\ref{stset}, there is an $(a,5m+7)$-set $Y\subseteq Z$. Let  $W=Y\cup\{2m+2\}$. 

For $a\ge 13$, 
\[
7m+9<14m-91\le\|Z_{\max}^{(a+1)}\|.
\]
If $\|Z_{\min}^{(a+1)}\|\le  7m+9$, then take an $(a+1,7m+9)$-set $Y\subseteq Z$ as $W$. 
Otherwise, $\|Z_{\min}^{(a+1)}\|> 7m+9$  
and take $W=Z_{\min}^{(a+1)}$. 

\item{$\bullet$} $c=2m+4$ (for $a\ge 5$ and $m\ge 21$)

We have $k+c=7m+12$ and $X=[1,m+1]\cup\{3m+5\}$. Let $Z=[3,m+1]$. 

If $5\le a\le 11$, then
\[
\|Z_{\min}^{(a)}\|\le 88<4m+7<5m-5\le\|Z_{\max}^{(a)}\|.
\]
By Lemma~\ref{stset}, there is an $(a,4m+7)$-set $Y\subseteq Z$. 
Thus, take $W=Y\cup\{3m+5\}$.

For $a\ge 12$, 
\[
7m+12<13m-65\le\|Z_{\max}^{(a+1)}\|.
\]
If $\|Z_{\min}^{(a+1)}\|\le  7m+12$, then take an $(a+1,7m+12)$-set $Y\subseteq Z$ as $W$. 
Otherwise, $\|Z_{\min}^{(a+1)}\|> 7m+12$ and  let $W=Z_{\min}^{(a+1)}$.

\item{$\bullet$} $c=2m+5$ (for $a\ge 6$ and $m\ge 24$)

Then $k+c=7m+15$ and $X=[1,m+1]\cup\{2m+4\}$. Let $Z=[3,m+1]$. 

If $6\le a\le 13$, then
\[
\|Z_{\min}^{(a)}\|\le 117<5m+11<6m-9\le\|Z_{\max}^{(a)}\|.
\]
By Lemma~\ref{stset}, there is an $(a,5m+11)$-set $Y\subseteq Z$. 
Take $W=Y\cup\{2m+4\}$.

For $a\ge 14$, 
\[
7m+15<15m-97\le\|Z_{\max}^{(a+1)}\|.
\]
If $\|Z_{\min}^{(a+1)}\|\le  7m+15$, take an $(a+1,7m+15)$-set $Y\subseteq Z$ as $W$. 
Otherwise, $\|Z_{\min}^{(a+1)}\|> 7m+15$ and let $W=Z_{\min}^{(a+1)}$.

\item{$\bullet$} $c=2m+6$ (for $a= 7$ and $m\ge 30$ or for $a\ge 8$ and $m\ge 26$)

We have $k+c=7m+15$ and $X=[1,m+2]$. Let $Z=[3,m+2]$. 

For $a=7$ and $m\ge 30$,  
\[
7m+18\le 8m-12=\|Z_{\max}^{(8)}\|.
\]
When $\|Z_{\min}^{(8)}\|\le  7m+18$, take an $(8,7m+18)$-set $Y\subseteq Z$ as $W$. Otherwise $\|Z_{\min}^{(8)}\|> 7m+18$ and let $W=Z_{\min}^{(8)}$.

For $a\ge 8$ and $m\ge 26$, 
\[
7m+18<9m-18\le \|Z_{\max}^{(a+1)}\|.
\]
As before, if $\|Z_{\min}^{(a+1)}\|\le  7m+18$, take an $(a+1,7m+18)$-set $Y\subseteq Z$ as $W$. Otherwise,  $\|Z_{\min}^{(a+1)}\|> 7m+18$ and let $W=Z_{\min}^{(a+1)}$.

\end{description}

\section{$(a,d)$-antimagic labelings}

An {\em $(a,d)$-antimagic labeling} for a graph $G$ is an antimagic labeling such that the vertex sums of the vertices form an arithmetic progression with $a$ as the initial term and $d$ as the common difference. 
This concept was introduced by Bodendiek and Walther in 1993~\cite{BW93}. 
A necessary condition for which a graph $G$ on $n$ vertices and $m$ edge has an $(a,d)$-antimagic labeling is 
\begin{equation}\label{ad}
2an+n(n-1)d=2m(m+1),    
\end{equation}
given by Bodendiek and Walther~\cite{BW98}. 
%An $(a,1)$-antimagic labeling is also called an {\em edge-graceful}~\cite{L85}. 
They also showed that the paths $P_{2k+1}$ have  the $(k,1)$-antimagic labeling and also the cycles $C_{2k+1}$ have the $(k+2,1)$-antimagic labelings for $k\ge 1$. 
For more results on $(a,1)$-antimagic labelings, see~\cite{B00,BFWZ15, BH98,I16,IS07,JISAS13}.
Note that by Equation (\ref{ad}), if a graph $G$ on $n$ vertices and $m$ edges has a $(1,1)$-antimagic labeling, then $m$ and $n$ satisfy the {\em negative Pell equation} 
\begin{equation}\label{pell}
(2n+1)^2-2(2m+1)^2=-1.
\end{equation}
There are infinitely many solutions for Equation~(\ref{pell}), 
and the smallest positive $n$ is $n=3$ with $m=2$. The only graph on $3$ vertices and $2$ edges is $P_3$, and an antimagic labeling for $P_3$ induces the vertex sums exactly 1, 2, and 3.
Moreover, by the fact that the number of edges of a connected graph is at least the number of the vertices of it minus one, we can conclude that a connected graph has a $(1,1)$-antimagic labeling only if $n\le 3$. However, this is not the case for disconnected graphs. 
Let us see some examples. 
The second smallest $n$ that satisfies the equation is $n=20$ with $m=14$. 
We observed that $S_{1,2}+5P_3$ satisfies the conditions of $m$ and $n$, and our labeling in Section 3 is a $(1,1)$-antimagic labeling for $S_{1,2}+5P_3$. In addition, we also found out other $(1,1)$-antimagic labelings for graphs on $20$ vertices and $14$ edges. 
These graphs are $P_5+5P_3$, $S_4+5P_3$, $2P_4+4P_3$, $2S_3+4P_3$, and $P_4+S_3+4P_3$.  
We show that these are all the graphs on 20 vertices and 14 edges having this kind of labeling and present a $(1,1)$-antimagic labeling for each graph in Figure~\ref{n20m14}. 
Assume that $G$ is a graph on 20 vertices and 14 edges, and $G$ contains $c$ components.
Let $n_i$ and $m_i$ be the numbers of vertices and edges of the $i$th component for $1\le i\le c$. Then we have 
\[20=n_1+n_2+\cdots +n_c\le (m_1+1)+(m_2+1)+\cdots+(m_c+1)=14+c\]
and hence $c\ge 6$. In addition, each component contains at least 3 vertices, so $n_i\ge 3$. 
Consequently, $G$ consists of 6 components which are 5 $P_3$'s and one tree on 5 vertices or 4 $P_3$'s and two trees on 4 vertices. 

We provide another class of graphs that have the $(1,1)$-antimagic labelings. 
The third smallest $n$ is $n=119$ with $m=84$. 
For $S_{a,b}+ 34P_3$ with $a+b=15$ and $3\le a \le b$, it has the required numbers of vertices and edges, 
and our labelings in Section 3 for $S_{a,b}+ 34P_3$ give $\{\phi(u_1),\phi(u_2),\ldots,\phi(u_{35})\}=f(E_{I})\cup[85,118]$. Thus, the vertex sums of all vertices except $u_{36}$ in $S_{a,b}+34P_3$ are $1,2,\ldots,118$, which forces $\phi(u_{36})=119$ by double counting the labels of the edges and the vertex sums of the vertices.

For the next $n$, $n=696$ with $m=492$, 
if $S_{a,b}+cP_3$ satisfies  $|V(S_{a,b}+cP_3)|=696$ and  $|E(S_{a,b}+cP_3)|=492$, then 
$|E(S_{a,b})|=86$ and $c=203$. 
However, in this case $S_{a,b}+203P_3$ cannot be antimagic since $203>178=2|E(S_{a,b})|+6$.
Nonetheless, it might be possible to find a graph $G$ with $|E(G)|=86$, $\tau(G)\ge 203$, and a $(1,1)$-antimagic labeling for $G$. 

If there is a graph on $n$ vertices and $m$ edges such that $m$ and $n$ satisfy Equation~(\ref{pell}), and contains no isolated vertices and edges, then it must contain several components isomorphic to $P_3$. 
This is because if every component of the graph contains at least 3 edges, then one can deduce $m/n \ge 3/4$, which will lead to a contradiction when plugged into Equation~(\ref{pell}).  
As a consequence, a $(1,1)$-antimagic labeling can exist only for graphs of the form $G+cP_3$. 
Actually, if a graph of the form $G+cP_3$ satisfies Equation~(\ref{pell}), and is antimagic, then the antimagic labeling must be a $(1,1)$-antimagic labeling by double counting the labels and the vertex sums. 
This strengthens the motivation for the study of the parameter $\tau(G)$ of general graphs.

In the end, we propose the following conjecture.

\begin{conjecture} 
Let $m$ and $n$ be positive integers that satisfy $(2n+1)^2-2(2m+1)^2=-1$. Then there exist a graph $G$ and an integer $c$ such that 
$|V(G+cP_3)|=n$, $|E(G+cP_3)|=m$ and $G+cP_3$ has a $(1,1)$-antimagic labeling. 
\end{conjecture}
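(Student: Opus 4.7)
The plan is to exploit the observation already highlighted in the discussion above: once any graph of the form $G+cP_3$ of the correct order and size is shown to be antimagic, the labeling is forced to be $(1,1)$-antimagic by double counting edge labels against vertex sums. Thus the conjecture reduces to the following existence statement: for every solution $(m,n)$ to $(2n+1)^2-2(2m+1)^2=-1$, exhibit a graph $G$ and an integer $c\ge 0$ with $|V(G)|+3c=n$, $|E(G)|+2c=m$, and $c\le \tau(G)$ (with, in addition, an antimagic labeling of $G+cP_3$ that can be explicitly written down).

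First I would parametrize the Pell solutions via their standard recurrence, so that $m_k/n_k\to 1/\sqrt{2}$, and translate the counting constraints. Writing $t$ for the number of components of $G$ and $\mu(G)=|E(G)|-|V(G)|+t$ for its cyclomatic number, the two equations $|E(G)|+2c=m$, $|V(G)|+3c=n$ combine into $c=(n-m)-t+\mu(G)$, so once $G$ is chosen the value of $c$ is determined. For the $k$-th Pell solution, $n_k-m_k\sim (1-1/\sqrt{2})n_k$, and taking $G$ to be a forest with $t$ components fits this growth exactly via $c=n-m-t$. The goal therefore becomes: for each index $k$, choose a forest $G_k$ on $n_k-3c_k$ vertices with $m_k-2c_k$ edges such that $\tau(G_k)\ge c_k$, and then construct the required labeling.

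Next I would try to realize these forests from the building blocks understood in the literature. Natural candidates are disjoint unions of stars, double stars, and caterpillars, for which antimagic labelings (and in several cases, the precise value of $\tau$) are known or are within reach of the techniques of Section~3. The verification of the first three Pell pairs $(2,3)$, $(14,20)$, $(84,119)$ already follows this template, using $P_3$, a tree on five vertices, and $S_{a,b}$ with $a+b=15$, respectively.

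The main obstacle is the simultaneous demand that $|E(G_k)|$ be small (so that the residual parameters $n_k-3c_k$ and $m_k-2c_k$ are realizable by a forest of the desired shape) while $\tau(G_k)$ be large; the upper bound $\tau(S_{a,b})\le 2m+6$ from Theorem~\ref{main}, already responsible for the failure of the single double-star candidate for the fourth Pell pair, shows that this tension is genuine. I would therefore expect the hardest step to be engineering, for arbitrarily large $k$, multi-component skeletons $G_k$ whose component-wise antimagic labelings can be stitched together into a single antimagic labeling of $G_k+c_k P_3$ whose vertex sums exhaust $\{1,2,\ldots,n_k\}$. A promising route is to iterate the extremal double-star or caterpillar constructions of Section~3 across several components, using Lemmas~\ref{2subset} and~\ref{stset} to partition the label set $[1,m_k]$ across components so that each component receives a consecutive-like block of labels to which the single-component construction can be applied.
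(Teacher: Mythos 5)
This statement is the paper's closing \emph{conjecture}: the authors pose it as an open problem and give no proof, so there is nothing in the paper to match your argument against. What you have written is a reduction plus a research plan, not a proof, and the plan stops exactly where the difficulty begins. Your first step is correct and is in fact the paper's own observation: the Pell condition $(2n+1)^2-2(2m+1)^2=-1$ is equivalent to $n(n+1)/2=m(m+1)$, the sum of all vertex sums of any edge labeling by $[1,m]$ is $m(m+1)$, and a graph of the form $G+cP_3$ (no isolated vertices) has $n$ distinct positive vertex sums whose minimum possible total is $n(n+1)/2$; hence any antimagic labeling is automatically $(1,1)$-antimagic. So the conjecture does reduce to: for every Pell pair $(m,n)$, exhibit $G$ and $c\ge 0$ with $|V(G)|+3c=n$, $|E(G)|+2c=m$, and $G+cP_3$ antimagic.

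That existence statement is the entire content of the conjecture, and your proposal does not establish it. The three cases you cite, $(2,3)$, $(14,20)$, $(84,119)$, are precisely the ones already settled in the paper; for the very next pair $(492,696)$ the paper shows a single double star cannot work, since one would need $c=203>178=2\cdot 86+6\ge\tau(S_{a,b})$, and it is left open whether any graph on $86$ edges has $\tau\ge 203$. Your own accounting makes the obstruction quantitative: for a forest $G$ with $t$ components you are forced to take $c=n-m-t$ and $|E(G)|=3m-2n+2t$, and since $m/n\to 1/\sqrt{2}$, the upper bound $\tau(G)\le\beta(G)$ of Theorem~\ref{theorem} already forces $t$ to grow linearly in $n$. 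You would therefore need antimagic labelings of $F+cP_3$ for forests $F$ with unboundedly many components, with $c$ near the extremal bound --- a result that neither this paper nor the cited literature provides, and for which Lemmas~\ref{2subset} and~\ref{stset} alone are nowhere near sufficient (they only produce label pairs with prescribed sums within a single interval, and do not address how to keep the vertex sums of the many internal vertices of $F$ distinct from each other and from the $P_3$ sums). Until that construction is supplied for all Pell indices, the argument is a heuristic outline of why the conjecture is plausible, not a proof of it.
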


\begin{figure}[ht]
$S_{1, 2}+5P_3$
 \begin{center}
\begin{picture}(300, 45)
\textcolor{red}{
\put(20,10){1}
\put(20,30){2}
\put(60,10){4}
\put(60,30){11}
\put(100,10){7}
\put(100,30){10}
\put(140,10){6}
\put(140,30){12}
\put(180,10){5}
\put(180,30){14}
\put(210,30){13}
\put(240,30){3}
\put(260,37){8}
\put(260,5){9}
}
\put(0, 5){\footnotesize\textcircled{1}}
\put(0, 25){\footnotesize\textcircled{3}}
\put(0, 45){\footnotesize\textcircled{2}}
\put(15, 5){\circle*{4} }
\put(15, 25){\circle*{4}}
\put(15, 45){\circle*{4} }
\put(15, 5){\line(0, 1){40}}
\put(55, 5){\circle*{4} }
\put(40, 5){\footnotesize\textcircled{11}}
\put(40, 25){\footnotesize\textcircled{15}}
\put(40, 45){\footnotesize\textcircled{4}}
\put(55, 25){\circle*{4}}
\put(55, 45){\circle*{4} }
\put(55, 5){\line(0, 1){40}}
\put(80, 5){\footnotesize\textcircled{7}}
\put(80, 25){\footnotesize\textcircled{17}}
\put(80, 45){\footnotesize\textcircled{10}}
\put(95, 5){\circle*{4} }
\put(95, 25){\circle*{4} }
\put(95, 45){\circle*{4} }
\put(95, 5){\line(0, 1){40}}
\put(120, 5){\footnotesize\textcircled{6}}
\put(120, 25){\footnotesize\textcircled{18}}
\put(120, 45){\footnotesize\textcircled{12}}
\put(135, 5){\circle*{4} }
\put(135, 25){\circle*{4} }
\put(135, 45){\circle*{4} }
\put(135, 5){\line(0, 1){40}}
\put(160, 5){\footnotesize\textcircled{5}}
\put(160, 25){\footnotesize\textcircled{19}}
\put(160, 45){\footnotesize\textcircled{14}}
\put(175, 5){\circle*{4} }
\put(175, 25){\circle*{4} }
\put(175, 45){\circle*{4} }
\put(175, 5){\line(0, 1){40}}
%%%%%%
\put(285, 5){\footnotesize\textcircled{9}}
\put(285, 45){\footnotesize\textcircled{8}}
\put(200, 15){\footnotesize\textcircled{13}}
\put(230, 15){\footnotesize\textcircled{16}}
\put(265, 22){\footnotesize\textcircled{20}}
\put(200, 25){\circle*{4}}
\put(230, 25){\circle*{4}}
\put(260, 25){\circle*{4}}
%\put(265, 25){ $u_{7}$}
\put(280, 5){\circle*{4}}
\put(280, 45){\circle*{4}}
\put(200, 25){\line(1, 0){60}}
\put(260, 25){\line(1, 1){20}}
\put(260, 25){\line(1, -1){20}}
%\put(230, 15){$u_{6}$}
%\put(275, 45){$S_{1, 2}$}
\end{picture}
\end{center}

$P_5+5P_3$

\begin{center}
\begin{picture}(330, 45)
\textcolor{red}{
\put(20,10){1}
\put(20,30){2}
\put(60,10){4}
\put(60,30){8}
\put(100,10){5}
\put(100,30){11}
\put(140,10){9}
\put(140,30){10}
\put(180,10){7}
\put(180,30){13}
\put(210,30){14}
\put(240,30){3}
\put(270,30){12}
\put(300,30){6}
}
\put(0, 5){\footnotesize\textcircled{1}}
\put(0, 25){\footnotesize\textcircled{3}}
\put(0, 45){\footnotesize\textcircled{2}}
\put(40, 5){\footnotesize\textcircled{4}}
\put(40, 25){\footnotesize\textcircled{12}}
\put(40, 45){\footnotesize\textcircled{8}}
\put(80, 5){\footnotesize\textcircled{5}}
\put(80, 25){\footnotesize\textcircled{16}}
\put(80, 45){\footnotesize\textcircled{11}}
\put(120, 5){\footnotesize\textcircled{9}}
\put(120, 25){\footnotesize\textcircled{19}}
\put(120, 45){\footnotesize\textcircled{10}}
\put(160, 5){\footnotesize\textcircled{7}}
\put(160, 25){\footnotesize\textcircled{20}}
\put(160, 45){\footnotesize\textcircled{13}}
\put(200, 15){\footnotesize\textcircled{14}}
\put(230, 15){\footnotesize\textcircled{17}}
\put(260, 15){\footnotesize\textcircled{15}}
\put(290, 15){\footnotesize\textcircled{18}}
\put(320, 15){\footnotesize\textcircled{6}}
%\put(0, 25){$u_1$}
\put(15, 5){\circle*{4} }
\put(15, 25){\circle*{4}}
\put(15, 45){\circle*{4} }
\put(15, 5){\line(0, 1){40}}
\put(55, 5){\circle*{4} }
%\put(40, 25){$u_2$}
\put(55, 25){\circle*{4}}
\put(55, 45){\circle*{4} }
\put(55, 5){\line(0, 1){40}}
%\put(80, 25){$u_3$}
\put(95, 5){\circle*{4} }
\put(95, 25){\circle*{4} }
\put(95, 45){\circle*{4} }
\put(95, 5){\line(0, 1){40}}
%\put(120, 25){$u_4$}
\put(135, 5){\circle*{4} }
\put(135, 25){\circle*{4} }
\put(135, 45){\circle*{4} }
\put(135, 5){\line(0, 1){40}}
%\put(160, 25){$u_5$}
\put(175, 5){\circle*{4} }
\put(175, 25){\circle*{4} }
\put(175, 45){\circle*{4} }
\put(175, 5){\line(0, 1){40}}
%%%%%%
\put(200, 25){\circle*{4}}
\put(230, 25){\circle*{4}}
\put(260, 25){\circle*{4}}
\put(290, 25){\circle*{4}}
\put(320, 25){\circle*{4}}
%\put(245, 25){ $u_{7}$}
\put(200, 25){\line(1, 0){120}}
%\put(275, 45){$S_{1, 2}$}
\end{picture}
\end{center}

$S_4+5P_3$

\begin{center}
\begin{picture}(280, 45)
\textcolor{red}{
\put(20,10){4}
\put(20,30){11}
\put(60,10){3}
\put(60,30){13}
\put(100,10){7}
\put(100,30){10}
\put(140,10){6}
\put(140,30){12}
\put(180,10){5}
\put(180,30){14}
\put(240,5){1}
\put(240,37){2}
\put(255,5){8}
\put(255,37){9}
}
\put(0, 5){\footnotesize\textcircled{4}}
\put(0, 25){\footnotesize\textcircled{15}}
\put(0, 45){\footnotesize\textcircled{11}}
\put(40, 5){\footnotesize\textcircled{3}}
\put(40, 25){\footnotesize\textcircled{16}}
\put(40, 45){\footnotesize\textcircled{13}}
\put(80, 5){\footnotesize\textcircled{7}}
\put(80, 25){\footnotesize\textcircled{17}}
\put(80, 45){\footnotesize\textcircled{10}}
\put(120, 5){\footnotesize\textcircled{6}}
\put(120, 25){\footnotesize\textcircled{18}}
\put(120, 45){\footnotesize\textcircled{12}}
\put(160, 5){\footnotesize\textcircled{5}}
\put(160, 25){\footnotesize\textcircled{19}}
\put(160, 45){\footnotesize\textcircled{14}}
\put(215, 5){\footnotesize\textcircled{1}}
\put(215, 45){\footnotesize\textcircled{2}}
\put(255, 22){\footnotesize\textcircled{20}}
\put(275, 5){\footnotesize\textcircled{8}}
\put(275, 45){\footnotesize\textcircled{9}}
%\put(0, 25){$u_1$}
\put(15, 5){\circle*{4} }
\put(15, 25){\circle*{4}}
\put(15, 45){\circle*{4} }
\put(15, 5){\line(0, 1){40}}
\put(55, 5){\circle*{4} }
%\put(40, 25){$u_2$}
\put(55, 25){\circle*{4}}
\put(55, 45){\circle*{4} }
\put(55, 5){\line(0, 1){40}}
%\put(80, 25){$u_3$}
\put(95, 5){\circle*{4} }
\put(95, 25){\circle*{4} }
\put(95, 45){\circle*{4} }
\put(95, 5){\line(0, 1){40}}
%\put(120, 25){$u_4$}
\put(135, 5){\circle*{4} }
\put(135, 25){\circle*{4} }
\put(135, 45){\circle*{4} }
\put(135, 5){\line(0, 1){40}}
%\put(160, 25){$u_5$}
\put(175, 5){\circle*{4} }
\put(175, 25){\circle*{4} }
\put(175, 45){\circle*{4} }
\put(175, 5){\line(0, 1){40}}
\put(230, 5){\line(1, 1){40}}
\put(230, 45){\line(1, -1){40}}
%%%%%%
\put(230, 5){\circle*{4}}
\put(230, 45){\circle*{4}}
\put(250, 25){\circle*{4}}
\put(270, 5){\circle*{4}}
\put(270, 45){\circle*{4}}
%\put(245, 25){ $u_{7}$}
%\put(275, 45){$S_{1, 2}$}
\end{picture}
\end{center}

$2P_4+4P_3$

\begin{center}
\begin{picture}(280, 45)
\textcolor{red}{
\put(20,10){8}
\put(20,30){12}
\put(60,10){7}
\put(60,30){11}
\put(100,10){6}
\put(100,30){10}
\put(140,10){4}
\put(140,30){9}
\put(180,15){1}
\put(180,45){3}
\put(210,15){13}
\put(210,45){14}
\put(240,15){2}
\put(240,45){5}
}
\put(0, 5){\footnotesize\textcircled{8}}
\put(0, 25){\footnotesize\textcircled{20}}
\put(0, 45){\footnotesize\textcircled{12}}
\put(40, 5){\footnotesize\textcircled{7}}
\put(40, 25){\footnotesize\textcircled{18}}
\put(40, 45){\footnotesize\textcircled{11}}
\put(80, 5){\footnotesize\textcircled{6}}
\put(80, 25){\footnotesize\textcircled{16}}
\put(80, 45){\footnotesize\textcircled{10}}
\put(120, 5){\footnotesize\textcircled{4}}
\put(120, 25){\footnotesize\textcircled{13}}
\put(120, 45){\footnotesize\textcircled{9}}
\put(170, 0){\footnotesize\textcircled{1}}
\put(200, 0){\footnotesize\textcircled{14}}
\put(230, 0){\footnotesize\textcircled{15}}
\put(260, 0){\footnotesize\textcircled{2}}
\put(170, 30){\footnotesize\textcircled{3}}
\put(200, 30){\footnotesize\textcircled{17}}
\put(230, 30){\footnotesize\textcircled{19}}
\put(260, 30){\footnotesize\textcircled{5}}
%\put(0, 25){$u_1$}
\put(15, 5){\circle*{4} }
\put(15, 25){\circle*{4}}
\put(15, 45){\circle*{4} }
\put(15, 5){\line(0, 1){40}}
\put(55, 5){\circle*{4} }
%\put(40, 25){$u_2$}
\put(55, 25){\circle*{4}}
\put(55, 45){\circle*{4} }
\put(55, 5){\line(0, 1){40}}
%\put(80, 25){$u_3$}
\put(95, 5){\circle*{4} }
\put(95, 25){\circle*{4} }
\put(95, 45){\circle*{4} }
\put(95, 5){\line(0, 1){40}}
%\put(120, 25){$u_4$}
\put(135, 5){\circle*{4} }
\put(135, 25){\circle*{4} }
\put(135, 45){\circle*{4} }
\put(135, 5){\line(0, 1){40}}
%\put(160, 25){$u_5$}
%%%%%%
\put(170, 10){\circle*{4}}
\put(200, 10){\circle*{4}}
\put(230, 10){\circle*{4}}
\put(260, 10){\circle*{4}}
%\put(245, 25){ $u_{7}$}
\put(170, 10){\line(1, 0){90}}
%\put(275, 45){$S_{1, 2}$}
\put(170, 40){\circle*{4}}
\put(200, 40){\circle*{4}}
\put(230, 40){\circle*{4}}
\put(260, 40){\circle*{4}}
%\put(245, 25){ $u_{7}$}
\put(170, 40){\line(1, 0){90}}
\end{picture}
\end{center}

$2S_3+4P_3$

\begin{center}
\begin{picture}(295, 45)
\textcolor{red}{
\put(20,10){4}
\put(20,30){11}
\put(60,10){3}
\put(60,30){13}
\put(100,10){7}
\put(100,30){10}
\put(140,10){6}
\put(140,30){12}
\put(190,20){1}
\put(180,45){5}
\put(240,15){2}
\put(210,45){14}
\put(270,15){8}
\put(250,25){9}
}
\put(0, 5){\footnotesize\textcircled{4}}
\put(0, 25){\footnotesize\textcircled{15}}
\put(0, 45){\footnotesize\textcircled{11}}
\put(40, 5){\footnotesize\textcircled{3}}
\put(40, 25){\footnotesize\textcircled{16}}
\put(40, 45){\footnotesize\textcircled{13}}
\put(80, 5){\footnotesize\textcircled{7}}
\put(80, 25){\footnotesize\textcircled{17}}
\put(80, 45){\footnotesize\textcircled{10}}
\put(120, 5){\footnotesize\textcircled{6}}
\put(120, 25){\footnotesize\textcircled{18}}
\put(120, 45){\footnotesize\textcircled{12}}
\put(290, 0){\footnotesize\textcircled{8}}
\put(200, 0){\footnotesize\textcircled{1}}
\put(230, 0){\footnotesize\textcircled{2}}
\put(260, 0){\footnotesize\textcircled{19}}
\put(170, 30){\footnotesize\textcircled{3}}
\put(202, 30){\footnotesize\textcircled{20}}
\put(230, 30){\footnotesize\textcircled{14}}
\put(262, 30){\footnotesize\textcircled{9}}
%\put(0, 25){$u_1$}
\put(15, 5){\circle*{4} }
\put(15, 25){\circle*{4}}
\put(15, 45){\circle*{4} }
\put(15, 5){\line(0, 1){40}}
\put(55, 5){\circle*{4} }
%\put(40, 25){$u_2$}
\put(55, 25){\circle*{4}}
\put(55, 45){\circle*{4} }
\put(55, 5){\line(0, 1){40}}
%\put(80, 25){$u_3$}
\put(95, 5){\circle*{4} }
\put(95, 25){\circle*{4} }
\put(95, 45){\circle*{4} }
\put(95, 5){\line(0, 1){40}}
%\put(120, 25){$u_4$}
\put(135, 5){\circle*{4} }
\put(135, 25){\circle*{4} }
\put(135, 45){\circle*{4} }
\put(135, 5){\line(0, 1){40}}
%\put(160, 25){$u_5$}
%%%%%%
\put(290, 10){\circle*{4}}
\put(200, 10){\circle*{4}}
\put(230, 10){\circle*{4}}
\put(260, 10){\circle*{4}}
%\put(245, 25){ $u_{7}$}
\put(230, 10){\line(1, 0){60}}
%\put(275, 45){$S_{1, 2}$}
\put(170, 40){\circle*{4}}
\put(200, 40){\circle*{4}}
\put(230, 40){\circle*{4}}
\put(260, 40){\circle*{4}}
%\put(245, 25){ $u_{7}$}
\put(170, 40){\line(1, 0){60}}
\put(200, 40){\line(0, -1){30}}
\put(260, 10){\line(0, 1){30}}
\end{picture}
\end{center}

$P_4+S_3+4P_3$

\begin{center}
\begin{picture}(330, 45)
\textcolor{red}{
\put(20,10){6}
\put(20,30){11}
\put(60,10){5}
\put(60,30){13}
\put(100,10){9}
\put(100,30){10}
\put(140,10){8}
\put(140,30){12}
\put(170,15){3}
\put(190,35){7}
\put(190,5){4}
\put(240,30){1}
\put(270,30){14}
\put(300,30){2}
}
\put(0, 5){\footnotesize\textcircled{6}}
\put(0, 25){\footnotesize\textcircled{17}}
\put(0, 45){\footnotesize\textcircled{11}}
\put(40, 5){\footnotesize\textcircled{5}}
\put(40, 25){\footnotesize\textcircled{18}}
\put(40, 45){\footnotesize\textcircled{13}}
\put(80, 5){\footnotesize\textcircled{9}}
\put(80, 25){\footnotesize\textcircled{19}}
\put(80, 45){\footnotesize\textcircled{19}}
\put(120, 5){\footnotesize\textcircled{8}}
\put(120, 25){\footnotesize\textcircled{20}}
\put(120, 45){\footnotesize\textcircled{12}}
\put(195, 22){\footnotesize\textcircled{14}}
\put(215, 0){\footnotesize\textcircled{4}}
\put(290, 15){\footnotesize\textcircled{16}}
\put(320, 15){\footnotesize\textcircled{2}}
\put(160, 30){\footnotesize\textcircled{3}}
\put(215, 45){\footnotesize\textcircled{7}}
\put(230, 15){\footnotesize\textcircled{1}}
\put(260, 15){\footnotesize\textcircled{15}}
%\put(0, 25){$u_1$}
\put(15, 5){\circle*{4} }
\put(15, 25){\circle*{4}}
\put(15, 45){\circle*{4} }
\put(15, 5){\line(0, 1){40}}
\put(55, 5){\circle*{4} }
%\put(40, 25){$u_2$}
\put(55, 25){\circle*{4}}
\put(55, 45){\circle*{4} }
\put(55, 5){\line(0, 1){40}}
%\put(80, 25){$u_3$}
\put(95, 5){\circle*{4} }
\put(95, 25){\circle*{4} }
\put(95, 45){\circle*{4} }
\put(95, 5){\line(0, 1){40}}
%\put(120, 25){$u_4$}
\put(135, 5){\circle*{4} }
\put(135, 25){\circle*{4} }
\put(135, 45){\circle*{4} }
\put(135, 5){\line(0, 1){40}}
%\put(160, 25){$u_5$}
%%%%%%
\put(160, 25){\circle*{4}}
\put(190, 25){\circle*{4}}
\put(260, 25){\circle*{4}}
\put(290, 25){\circle*{4}}
\put(320, 25){\circle*{4}}
%\put(245, 25){ $u_{7}$}
\put(160, 25){\line(1, 0){30}}
\put(190, 25){\line(1, 1){20}}
\put(190, 25){\line(1, -1){20}}
%\put(275, 45){$S_{1, 2}$}
\put(210, 5){\circle*{4}}
\put(210, 45){\circle*{4}}
\put(230, 25){\circle*{4}}
\put(260, 25){\circle*{4}}
%\put(245, 25){ $u_{7}$}
\put(230, 25){\line(1, 0){90}}
\end{picture}
\end{center}

\caption{Graphs on 20 vertices and 14 edges and the $(1,1)$-antimagic labelings.}
    \label{n20m14}
\end{figure}

\clearpage

\end{document}